\newcommand{\bC}{{\mathbb{C}}}
\newcommand{\bM}{{\mathbb{M}}}
  \newcommand{\A}{{\mathcal{A}}}
  \newcommand{\D}{{\mathcal{D}}}
  \newcommand{\E}{{\mathcal{E}}}
  \newcommand{\R}{{\mathcal{R}}}
  \newcommand{\U}{{\mathcal{U}}}
\newcommand{\fd}{{\mathfrak{d}}}
\newcommand{\fs}{{\mathfrak{s}}}
\newcommand{\rC}{\mathrm{C}}
\newcommand{\eps}{\varepsilon}
\renewcommand{\phi}{\varphi}
\newcommand{\upchi}{{\raise.35ex\hbox{$\chi$}}}
\newcommand{\ol}{\overline}
\newcommand{\qand}{\quad\text{and}\quad}
\newcommand{\Cone}{\operatorname{Cone}}
\newcommand{\Order}{\operatorname{Order}}
\newcommand{\id}{\operatorname{id}}
\newcommand{\re}{\operatorname{Re}}
\newcommand{\tr}{\operatorname{tr}}
\newtheorem{lemma}{Lemma}[section]
\newtheorem{theorem}[lemma]{Theorem}
\newtheorem{proposition}[lemma]{Proposition}
\newtheorem{corollary}[lemma]{Corollary}
\newtheorem{theoremx}{Theorem}
\theoremstyle{definition}
\date{\today}
\author{Rapha\"el Clou\^atre}
\author{Hridoyananda Saikia}
\address{Department of Mathematics, University of Manitoba, Winnipeg, Manitoba, Canada R3T 2N2}
\email{raphael.clouatre@umanitoba.ca}
\email{saikiah@myumanitoba.ca}
\thanks{R.C. was partially supported by an NSERC Discovery Grant.}
\title{A boundary projection for the dilation order}
\begin{document}
\begin{abstract}
Motivated by Arveson's conjecture, we introduce a notion of hyperrigidity for a partial order on the state space of a $\rC^*$-algebra $B$. We show how this property is equivalent to the existence of a \emph{boundary}: a subset of the pure states which completely encodes maximality in the given order.
In the classical case where $B$ is commutative, such boundaries are known to exist when the partial order is induced by some well-behaved cone. However, the relevant order for the purposes of Arveson's conjecture is the dilation order, which is not known to fit into this framework. Our main result addresses this difficulty by showing that the dilation maximal states are stable under absolute continuity. Consequently, we obtain the existence of a boundary projection in the bidual $B^{**}$, on which all dilation maximal states must be concentrated. The topological regularity of this boundary projection is shown to lie at the heart of Arveson's conjecture. Our techniques do not require $B$ to be commutative.

\end{abstract}
\maketitle

\section{Introduction}
Let $B$ be a unital $\rC^*$-algebra generated by an operator system $S\subset B$. A unital $*$-representation $\pi:B\to B(H)$ is said to have the \emph{unique extension property with respect to $S$} if any unital completely positive map $\psi:B\to B(H)$ agreeing with $\pi$ on $S$ must in fact agree with $\pi$ everywhere on $B$. This special class of $*$-representations, and especially its irreducible elements called \emph{boundary representations}, first rose to prominence as the centrepiece of Arveson's program for constructing the $\rC^*$-envelope of $S$ \cite{arveson1969},\cite{arveson1972}, which eventually came to fruition in \cite{dritschel2005},\cite{arveson2008},\cite{davidsonkennedy2015}. 

Beyond this important application, $*$-representations with the unique extension property have since become a mainstay in modern non-selfadjoint operator algebra theory, where they can be meaningfully interpreted as forming the non-commutative Choquet boundary of $S$; see \cite{DK2019},\cite{CTh2023} and the references therein. 

In \cite{arveson2011}, Arveson showed that a certain rigidity property of $S$ inside of $B$, inspired by a classical phenomenon observed by Korovkin in approximation theory, could be reformulated using the unique extension property. More precisely, we say that $S$ is \emph{hyperrigid} if every unital $*$-representation of $B$ has the unique extension property with respect to $S$. Guided by \v Sa\v skin's explanation of the Korovkin phenomenon \cite{saskin1967}, Arveson ventured the following.

\vspace{3mm}

\textbf{Hyperridigity conjecture.} The operator system $S$ is hyperrigid in $B$ whenever all irreducible $*$-representations of $B$ are boundary representations for $S$.

\vspace{3mm}

In essence, Arveson conjectured that to verify that the unique extension property holds for all $*$-representations, it suffices to determine whether it holds for all \emph{irreducible} $*$-representations. This conjecture is still wide open, despite attracting attention for more than a decade \cite{kleski2014hyper},\cite{kennedyshalit2015},\cite{clouatre2018unp},\cite{clouatre2018lochyp},\cite{salomon2019},\cite{KR2020},\cite{kim2021}. Notably,  even the case where $B$ is commutative has not been settled; see \cite{DK2021} for some recent work in this direction. Our aim in this paper is to shed new light on the conjecture, with no requirement of commutativity.

Basic theory of $\rC^*$-algebras guarantees that any $*$-representation is a direct sum of cyclic ones. In addition, the unique extension property passes both to direct sums and to subrepresentations \cite[Lemma 2.8]{CTh2022}, so that $S$ is hyperrigid in $B$ whenever all \emph{cyclic} $*$-representations of $B$ have the unique extension property with respect to $S$. Recent work of Davidson and Kennedy \cite{DK2019},\cite{DK2021} exhibited  a mechanism for tackling this issue.

Let $\phi$ be a state on $B$. Let $\pi:B\to B(H)$ be a unital $*$-representation and let  $\xi\in H$ be a unit vector. We say that the triple $(\pi,H,\xi)$ is a \emph{representation of $\phi$} if
\[
\phi(b)=\langle \pi(b)\xi,\xi\rangle, \quad b\in B.
\]
Via the GNS construction, there is a one-to-one correspondence between states and cyclic $*$-representations. 
As shown in \cite{DK2019},\cite{DK2021}, the unique extension property for a cyclic $*$-representation $\pi$ is entirely encoded in the corresponding state $\phi$. More precisely, it follows from \cite[Theorem 8.3.7]{DK2019} that $\pi$ has the unique extension property with respect to $S$ precisely when the state $\phi$ is maximal in a certain partial order on the state space of $B$, called the \emph{dilation order} (the precise definition of this order will be given in Section \ref{S:dilordercone}). Consequently, a deeper  understanding of the dilation maximal states is highly relevant for the purposes of Arveson's conjecture, and this is what our work aims to provide.

In the special case where $B$ is commutative, the authors of \cite{DK2021} capitalize on this alternative characterization of the unique extension property by relating the dilation order to another order, much more classical and transparent, called the Choquet order. Crucially, the maximal elements in the Choquet order are completely understood as those states on $B$ concentrated on a certain subset, called the Choquet boundary. As explained in \cite[Question 8.4]{DK2021}, Arveson's conjecture, at least in the commutative setting, then becomes equivalent to determining whether the dilation and Choquet orders share the same maximal elements. We explore this theme further, and examine abstract partial orders on state spaces of arbitrary $\rC^*$-algebras. 
 
For  a unital $\rC^*$-algebra $B$, we let $\E(B)$ denote its state space, and $\E_p(B)$ denote the pure states. Given a state $\phi$ on $B$, we let $\R_\phi$ denote the set of Borel probability measures $\mu$ on $\E_p(B)$ satisfying $\phi=\int \omega d\mu(\omega)$. Such measures always exist, at least in the separable setting \cite[Theorem 4.2]{bishop1959}.

Given a Borel measurable subset $X\subset \E_p(B)$, we let $\Sigma_X$ denote the set of those states $\phi$ for which there exists $\mu\in \R_\phi$ concentrated on $X$. Furthermore, we let $\Sigma^X$ denote the set of those states $\phi$ for which every $\mu\in \R_\phi$ is concentrated on $X$.

Let $\Delta\subset \E(B)\times \E(B)$ be partial order, and denote its maximal elements by $\max(\Delta)$.
A Borel measurable subset $X\subset \E_p(B)$ will be called a $\Delta$-\emph{boundary} if 
\[
\max(\Delta)=\Sigma^X=\Sigma_X.
\]
We say that the order $\Delta$ is \emph{hyperrigid} if $\Sigma_\Omega\subset \max(\Delta)$, where $\Omega=\E_p(B)\cap \max(\Delta)$ is the set of pure $\Delta$-maximal states.  In other words, $\Delta$ is hyperrigid precisely when states of the form $\int \omega d\mu(\omega)$ are $\Delta$-maximal, where $\mu$ is a Borel probability measure concentrated on the pure $\Delta$-maximal states. 

In Section \ref{S:boundaries}, the relationships between the above notions are studied. In Corollary \ref{C:bdryHR}, we show that hyperrigidity of $\Delta$ is equivalent to the existence of a $\Delta$-boundary, at least when $B$ is separable and $\Delta$ is weak-$*$ closed and convex.

In Section \ref{S:dilordercone}, we define the dilation order $\D(S,B)\subset \E(B)\times \E(B)$ relative to an operator system $S$ generating $B$. Using the terminology introduced above, in Corollary \ref{C:Arvreform} we show that Arveson's conjecture can be reformulated as saying that  $\D(S,B)$ is hyperrigid whenever all pure states are $\D(S,B)$-maximal. In turn, this is equivalent to the existence of a $\D(S,B)$-boundary. 

Classically, it is known that certain partial orders on the state space of a commutative $\rC^*$-algebra always admit boundaries. For instance, in  \cite[Corollary I.5.18]{alfsen1971} the condition that the order be induced by some cone in $B$ that is stable under taking maxima is shown to be sufficient.  We thus examine the dilation order from this perspective, so as to determine whether the aforementioned machinery can be employed to produce the boundary that we seek. To do this, we rely rather heavily on the developments in non-commutative Choquet theory and function theory obtained recently by Davidson and Kennedy \cite{DK2019}.

When $B$ is chosen to be the so-called maximal $\rC^*$-cover of $S$, then we show in Proposition \ref{P:dilordercone} that the dilation order is indeed induced by some cone $\Xi$, whose closure is unique with this property (Corollary \ref{C:dilconeunique}). For a general representation of $S$, these facts still allow us to identify the dilation maximal states in terms of an order induced by a cone (Theorem \ref{T:dilmaxinv}).  Unfortunately, even when $S$ can be represented in a commutative $\rC^*$-algebra, the corresponding cone is \emph{not}  stable under maxima (Proposition \ref{P:maxstable}), so that the aforementioned result from \cite{alfsen1971} cannot be used to manufacture a boundary and thus resolve Arveson's conjecture. 

In Section \ref{S:bdryproj}, we will see how this difficulty in exhibiting a boundary can be somewhat circumvented, provided that one is willing to look for a ``non-classical" boundary. The key observation that we make is that  $\D(S,B)$-maximality is preserved under absolute continuity (Theorem  \ref{T:abscont}), thus enabling the use of some results from \cite{CTh2023} on non-commutative measure theory. The main result of the paper is then the following (see Theorem \ref{T:bdryproj}).

\begin{theoremx}\label{T:A}
Let $B$ be a unital $\rC^*$-algebra and let $S\subset B$ be an operator system such that $B=\rC^*(S)$. Then, there exists a projection $\fd \in B^{**}$ with the property that a state $\phi$ on $B$ is $\D(S,B)$-maximal precisely when $\phi(\fd)=1$.
\end{theoremx}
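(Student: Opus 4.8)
The plan is to construct $\fd$ by hand as a supremum of support projections and then reduce the theorem to the stability of maximality under absolute continuity. Recall that states on $B$ are exactly the normal states on the von Neumann algebra $B^{**}$, and that each state $\phi$ carries a support projection $s(\phi)\in B^{**}$, the smallest projection with $\phi(s(\phi))=1$; for any projection $p\in B^{**}$ one has $\phi(p)=1$ if and only if $s(\phi)\le p$. With this dictionary in place I would define
\[
\fd \ = \ \bigvee_{\psi\in\max(\D(S,B))} s(\psi) \ \in \ B^{**},
\]
the supremum being taken in the projection lattice of $B^{**}$.

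One inclusion is then immediate. If $\phi\in\max(\D(S,B))$, then $s(\phi)$ is among the projections entering the supremum, so $s(\phi)\le\fd$ and hence $\phi(\fd)=1$. The content of the theorem is the converse: assuming $\phi(\fd)=1$, equivalently $s(\phi)\le\fd$, one must show $\phi$ is dilation maximal. Here the idea is to reinterpret the lattice domination $s(\phi)\le\fd$ as a form of absolute continuity of $\phi$ with respect to the family of maximal states, and then to invoke Theorem \ref{T:abscont}. Concretely I would appeal to the non-commutative measure theory of \cite{CTh2023}, extracting from it that a state is absolutely continuous relative to a family $\F$ of states precisely when its support projection is dominated by $\bigvee_{\psi\in\F}s(\psi)$. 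Granting this, $s(\phi)\le\fd$ says exactly that $\phi$ is absolutely continuous with respect to $\max(\D(S,B))$, and since Theorem \ref{T:abscont} ensures that maximality is preserved under absolute continuity, we obtain $\phi\in\max(\D(S,B))$. As every maximal state is trivially absolutely continuous with respect to itself, the two implications combine to give $\max(\D(S,B))=\{\phi:\phi(\fd)=1\}$.

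The hard part lives entirely in the converse, in matching the order-theoretic domination $s(\phi)\le\fd$ with the operator-algebraic notion of absolute continuity appearing in Theorem \ref{T:abscont}. In the commutative case this is the classical fact that a measure is absolutely continuous with respect to a family exactly when it is carried by the union of their supports; but in $B^{**}$ suprema of non-commuting support projections are subtle, and absolute continuity relative to a possibly uncountable family of states must be set up with care. This is precisely where the machinery of \cite{CTh2023} is indispensable: the key technical checks are that its notion of absolute continuity coincides with the one used in Theorem \ref{T:abscont}, and that the hereditary family $\max(\D(S,B))$ is stable under the operations (countable convex combinations and passage to absolutely continuous parts) required for the single projection $\fd$ to detect absolute continuity exactly. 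Once these compatibilities are secured, no separability or commutativity hypothesis on $B$ is needed, and the projection $\fd$ furnishes the desired characterization.
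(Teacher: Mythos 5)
Your construction of $\fd$ as $\bigvee_{\psi\in\max(\D(S,B))}s(\psi)$ is a reasonable alternative packaging of the paper's argument, which instead first proves that $\max(\D(S,B))$ is a norm-closed face of $\E(B)$ (Theorem \ref{T:bdryproj}(i)) and then obtains $\fd$ abstractly from \cite[Theorem 3.5]{CT2023}. Both routes hinge on Theorem \ref{T:abscont}. Your dictionary between absolute continuity and support projections is correct: since $L_\psi=B^{**}(I-s(\psi))$ for a normal state $\psi$ on $B^{**}$, one has $L_\psi\subset L_\phi$ if and only if $s(\phi)\le s(\psi)$, which gives the easy inclusion $\max(\D(S,B))\subset\{\phi:\phi(\fd)=1\}$ and sets up the converse.

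The gap sits exactly where you park the "key technical checks." To run the converse you must get from $s(\phi)\le\bigvee_{\psi}s(\psi)$ to absolute continuity of $\phi$ with respect to a \emph{single} maximal state, since Theorem \ref{T:abscont} is stated for one dominating state, not a family. Two steps are needed and neither is supplied. First, a countable reduction: $\bigvee_\psi s(\psi)$ is the strong limit of the increasing net of finite suprema, so normality of $\phi$ on $B^{**}$ lets you choose countably many maximal states $\psi_n$ with $s(\phi)\le\bigvee_n s(\psi_n)=s\left(\sum_{n}2^{-n}\psi_n\right)$. Second, and this is the real content you defer: the state $\theta=\sum_n 2^{-n}\psi_n$ must itself be $\D(S,B)$-maximal. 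This is not a formal consequence of Theorem \ref{T:abscont}; the paper proves it by combining Theorem \ref{T:Dmaxuep} with the observation (as in Proposition \ref{P:abscontconv}(ii)) that the GNS representation of $\theta$ is a subrepresentation of $\bigoplus_n\sigma_n$, where $\sigma_n$ is the GNS representation of $\psi_n$, together with the fact that the unique extension property passes to direct sums and subrepresentations \cite[Lemma 2.8]{CTh2022}. Without closure of $\max(\D(S,B))$ under countable convex combinations, your $\fd$ could a priori satisfy $\phi(\fd)=1$ for a non-maximal $\phi$, so the characterization is not established as written. Once you insert these two steps, your proof closes and is essentially equivalent to the paper's, with the mild advantage that $\fd$ is exhibited explicitly rather than through the cited face theorem.
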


We refer to the projection $\fd$ above as the \emph{boundary projection}. We examine the non-commutative topological properties of $\fd$, in the sense of Akemann \cite{akemann1969},\cite{akemann1970left}. As an application, our second main result reformulates Arveson's conjecture in terms of regularity properties of $\fd$; see Corollary \ref{C:HRtop}.

\begin{theoremx}\label{T:B}
Assume that $B$ is separable and that every pure state on $B$ is $\D(S,B)$-maximal. 
Then, the following statements are equivalent.
\begin{enumerate}[{\rm (i)}]
\item The operator system $S$ is hyperrigid in $B$.
\item The boundary projection $\fd$ is closed.
\item The boundary projection $\fd$ is the infimum of a collection of open projections in $B^{**}$.
\end{enumerate}
\end{theoremx}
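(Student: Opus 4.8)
The plan is to route everything through the boundary projection $\fd$ of Theorem~\ref{T:A}, proving the chain (i)~$\Rightarrow$~(ii)~$\Rightarrow$~(iii)~$\Rightarrow$~(i). The organizing observation is that condition (i) is equivalent to the single identity $\fd = 1$. Indeed, as recalled in the introduction, $S$ is hyperrigid in $B$ precisely when every unital $*$-representation of $B$ has the unique extension property; since this property passes to subrepresentations and direct sums and every representation is a direct sum of cyclic ones, $S$ is hyperrigid if and only if every cyclic representation has the unique extension property, i.e.\ if and only if every state of $B$ is $\D(S,B)$-maximal. By Theorem~\ref{T:A} this says exactly that $\phi(\fd) = 1$ for every state $\phi$, which, testing the positive element $1-\fd$ against all (normal) states, is equivalent to $1 - \fd = 0$, that is $\fd = 1$. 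Granting this, (i)~$\Rightarrow$~(ii) is immediate, as the identity is a closed (indeed clopen) projection.

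For (ii)~$\Rightarrow$~(iii), I would invoke a general fact from Akemann's non-commutative topology: every open projection is the supremum of the closed projections it dominates. Concretely, if $u \in B^{**}$ is open then $u = \sup\{a \in B : 0 \le a \le u\}$, and for each such $a$ the spectral projections $\chi_{[1/k,\infty)}(a)$ are closed, lie below $u$, and satisfy $a \le \sup_k \chi_{[1/k,\infty)}(a)$ since $\|a\|\le 1$; taking suprema shows $u = \sup_{a,k}\chi_{[1/k,\infty)}(a)$. Applying this to the open projection $1 - \fd$ and passing to complements expresses $\fd$ as an infimum of open projections. This step is routine and, notably, does not require separability.

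The substance of the theorem lies in (iii)~$\Rightarrow$~(i), where I would exploit that the pure states detect nonzero closed projections. The key lemma is the following: \emph{if $q \in B^{**}$ is a nonzero closed projection, then some pure state $\omega$ satisfies $\omega(q) = 1$.} To prove it, set $F_q = \{\phi \in \E(B) : \phi(q) = 1\}$. This set is nonempty, since any normal state of $B^{**}$ taking the value $1$ at $q$ restricts to such a $\phi$; it is clearly convex and a face of $\E(B)$. Writing the open projection $1 - q = \sup\{a \in B : 0 \le a \le 1 - q\}$ and using normality of states on $B^{**}$ gives $F_q = \bigcap_a \{\phi : \phi(a) = 0\}$, an intersection of weak-$*$ closed sets, so $F_q$ is weak-$*$ closed and hence weak-$*$ compact. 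By Krein--Milman it has an extreme point, which is then extreme in $\E(B)$, i.e.\ a pure state lying in $F_q$.

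With the key lemma in hand, suppose (iii) holds, say $\fd = \inf_i u_i$ with each $u_i$ open. Every pure state $\omega$ is $\D(S,B)$-maximal by hypothesis, so $\omega(\fd) = 1$ by Theorem~\ref{T:A}; since $\fd \le u_i$ and $u_i \le 1$, this forces $\omega(u_i) = 1$, i.e.\ $\omega(1 - u_i) = 0$, for every pure state $\omega$. Were the closed projection $1 - u_i$ nonzero, the key lemma would produce a pure state assigning it the value $1$, a contradiction; hence $u_i = 1$ for every $i$ and $\fd = 1$. By the first paragraph this is exactly (i), closing the loop. The main obstacle is the key lemma of the third paragraph, the assertion that pure states suffice to detect closed projections, which is precisely what makes the a priori weaker condition (iii) collapse to $\fd = 1$; separability enters only through the ambient framework underlying Theorem~\ref{T:A} and the reformulation of hyperrigidity (the existence of representing measures on the pure states).
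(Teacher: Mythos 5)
Your proof is correct, and while the skeleton (everything reduces to the single identity $\fd=1$, with (i)~$\Leftrightarrow$~$\fd=1$ and the chain (i)~$\Rightarrow$~(ii)~$\Rightarrow$~(iii)~$\Rightarrow$~(i)) matches the paper's, the decisive implication (iii)~$\Rightarrow$~(i) is handled by a genuinely different mechanism. The paper argues measure-theoretically: since every pure state is maximal, Bishop--de~Leeuw puts every state in $\Sigma_\Omega$, and a dominated-convergence argument (Corollary~\ref{C:BdL}, using a decreasing \emph{sequence} of contractions converging to a closed projection, hence separability) shows every such state annihilates any closed projection orthogonal to $\fd$; applied to $I-u$ for $u$ in the defining family this forces $u=I$. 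You instead prove a purely topological detection lemma --- a nonzero closed projection carries a pure state via Krein--Milman applied to the weak-$*$ compact face $F_q=\{\phi:\phi(q)=1\}$ --- and use only that \emph{pure} states annihilate $I-u$. Your route buys two things: it bypasses representing measures and Corollary~\ref{C:BdL} entirely, and it does not use separability anywhere (Theorem~\ref{T:bdryproj}, the unique-extension-property reformulation, and your key lemma are all separability-free), so your argument in fact proves the equivalence for arbitrary unital $\rC^*$-algebras; the paper's route, by contrast, yields the stronger standalone statement of Corollary~\ref{C:BdL} about all states in $\Sigma_\Omega$, which is of independent interest as a noncommutative Bishop--de~Leeuw theorem. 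Your direct argument for (ii)~$\Rightarrow$~(iii) (open projections are suprema of the closed spectral projections $\chi_{[1/k,\infty)}(a)$ of the positive elements they dominate, then complement) is a sound replacement for the paper's citation of Hay, with the small caveat that you should take the supremum over the family of \emph{projections} $\chi_{[1/k,\infty)}(a)$ rather than over the non-directed family of positive elements $a$ itself, exactly as you in fact do in the final display of that step.
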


\section{Boundaries and hyperrigidity for pre-orders}\label{S:boundaries}
 
Let $B$ be a unital $\rC^*$-algebra, let $\E(B)$ denote its state space, and let $\E_p(B)$ denote the pure states. In this section,  we aim to understand the structure of the maximal elements in some pre-orders defined on $\E(B)$. In the next section, our findings will be applied to a specific example of a partial order, but we proceed here in greater generality.

We begin with a technical fact.

\begin{lemma}\label{L:Borel}
Let $B$ be a separable unital $\rC^*$-algebra, and let $\Delta\subset \E(B)\times \E(B)$ be a weak-$*$ closed pre-order.
Let $\{a_n:n\geq 1\}$ be a countable dense subset of the self-adjoint part of $B$. For each pair of integers $m,n\geq 1$, let $K_{n,m}\subset \E(B)$ consist of those states $\phi$ for which  there is another state $\psi$ such that $(\phi,\psi)\in \Delta$ and
$
\psi(a_n)-\phi(a_n)\geq 1/m.
$
Then, each $K_{n,m}$ is weak-$*$ closed, and  $\bigcup_{n,m=1}^\infty K_{n,m}$ is  the set of states on $B$ that are not $\Delta$-maximal. In particular, the set of $\Delta$-maximal states is Borel measurable.
\end{lemma}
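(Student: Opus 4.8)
The plan is to show three things: each $K_{n,m}$ is weak-$*$ closed; the union $\bigcup_{n,m} K_{n,m}$ is exactly the complement of $\max(\Delta)$ inside $\E(B)$; and conclude the final measurability claim. Throughout I use that $\E(B)$ is weak-$*$ compact and metrizable (since $B$ is separable), so weak-$*$ closed sets are Borel.

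\begin{proof}
We first show that each $K_{n,m}$ is weak-$*$ closed. Fix $n,m\geq 1$ and let $(\phi_k)$ be a net in $K_{n,m}$ converging weak-$*$ to some state $\phi$. For each $k$, choose a state $\psi_k$ with $(\phi_k,\psi_k)\in\Delta$ and $\psi_k(a_n)-\phi_k(a_n)\geq 1/m$. By weak-$*$ compactness of $\E(B)$, after passing to a subnet we may assume $\psi_k\to\psi$ weak-$*$ for some state $\psi$. Since $\Delta$ is weak-$*$ closed and $(\phi_k,\psi_k)\in\Delta$ for all $k$, we obtain $(\phi,\psi)\in\Delta$. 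Moreover, evaluation at the fixed element $a_n$ is weak-$*$ continuous, so $\psi(a_n)-\phi(a_n)=\lim_k\big(\psi_k(a_n)-\phi_k(a_n)\big)\geq 1/m$. Hence $\phi\in K_{n,m}$, proving that $K_{n,m}$ is weak-$*$ closed.

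Next I identify the union with the non-maximal states. If $\phi\in K_{n,m}$ for some $n,m$, then there is a state $\psi$ with $(\phi,\psi)\in\Delta$ and $\psi(a_n)>\phi(a_n)$; in particular $\psi\neq\phi$, so $\phi$ is not $\Delta$-maximal. Conversely, suppose $\phi$ is not $\Delta$-maximal, so there exists a state $\psi\neq\phi$ with $(\phi,\psi)\in\Delta$. Since $\phi\neq\psi$ as states and $\{a_k\}$ is dense in the self-adjoint part of $B$, the functionals $\phi,\psi$ differ on some self-adjoint element, hence on some $a_n$. Replacing $a_n$ by $-a_n$ if necessary (which lies in the closure of $\{a_k\}$, so we may pick another index realizing a value arbitrarily close to $-a_n$), we may arrange $\psi(a_n)-\phi(a_n)>0$; then $\psi(a_n)-\phi(a_n)\geq 1/m$ for some $m$, giving $\phi\in K_{n,m}$. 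The main technical point to handle carefully here is this density approximation: since $\{a_k\}$ is dense but need not contain $-a_n$ exactly, I approximate a witnessing self-adjoint element $b$ with $\psi(b)-\phi(b)>0$ by some $a_{n'}$ with $\|b-a_{n'}\|$ small enough that $\psi(a_{n'})-\phi(a_{n'})$ remains positive, using that $\|\psi-\phi\|$ as a functional bounds the perturbation.

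Finally, the two displayed facts give $\max(\Delta)=\E(B)\setminus\bigcup_{n,m=1}^\infty K_{n,m}$. Each $K_{n,m}$ is weak-$*$ closed, hence Borel, so the countable union $\bigcup_{n,m}K_{n,m}$ is Borel; its complement $\max(\Delta)$ is therefore Borel measurable as well.
\end{proof}
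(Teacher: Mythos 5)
Your proof is correct and follows essentially the same route as the paper's: the same net-and-subnet compactness argument for closedness of each $K_{n,m}$, the same sign-flip-plus-density argument identifying the union with the non-maximal states, and the same conclusion that the complement of a countable union of closed sets is Borel. The only difference is cosmetic — you spell out the perturbation estimate $|(\psi-\phi)(b-a_{n'})|\leq \|\psi-\phi\|\,\|b-a_{n'}\|$ that the paper leaves as ``easily implies,'' which is a fine addition.
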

\begin{proof}
It is clear that $\bigcup_{n,m=1}^\infty K_{n,m}$ is contained in the set of states on $B$ that are not $\Delta$-maximal. Conversely, if $\phi$ is a state on $B$ which is not $\Delta$-maximal, then there is another state $\psi\neq \phi$ such that $(\phi,\psi)\in \Delta$. This implies that there must be a self-adjoint element $b\in B$ such that $\phi(b)\neq \psi(b)$. Upon replacing $b$ with $-b$ if necessary, we may assume that $\psi(b)-\phi(b)>0$. The density of the set $\{a_n\}$ then easily implies that $\phi \in \bigcup_{n,m=1}^\infty K_{n,m}$. We thus conclude that $\bigcup_{n,m=1}^\infty K_{n,m}$ is the set of states on $B$ that are not $\Delta$-maximal.

Fix integers $m,n\geq 1$, and let $(\phi_i)$ be a net in $K_{n,m}$ converging to some state $\phi\in \E(B)$ in the weak-$*$ topology. By definition, this means that there is another net  of states $(\psi_i)$ such that $(\phi_i,\psi_i)\in \Delta$ and $\psi_i(a_n)-\phi_i(a_n)\geq 1/m$.  Upon passing to a cofinal subnet, we may assume that $(\psi_i)$ also converges to some state $\psi\in \E(B)$ in the weak-$*$ topology. Clearly, we then have $
\psi(a_n)-\phi(a_n)\geq 1/m,
$
while $(\phi,\psi)\in \Delta$ since $\Delta$ is assumed to be weak-$*$ closed. This shows that $\phi\in K_{n,m}$, so indeed $K_{n,m}$ is closed in the weak-$*$ topology.

Finally, the previous paragraph implies that the $\Delta$-maximal states form a $G_\delta$-set, and hence a Borel measurable set.
\end{proof}

Given a state $\phi$ on $B$, we let $\R_\phi$ denote the set of Borel probability measures $\mu$ on $\E(B)$ concentrated on $\E_p(B)$ and satisfying 
\[
\phi(b)=\int \omega(b) d\mu(\omega),\quad b\in B.
\]
When $B$ is separable, such measures always exist  \cite[Theorem 4.2]{bishop1959}.

The following is inspired by the proof of \cite[Corollary 3.3]{bishop1959}, and it generalizes the separable version of \cite[Proposition 9.2.5]{DK2019} to a large class of pre-orders.

\begin{theorem}\label{T:BdL}
Let $B$ be a separable unital $\rC^*$-algebra. Let $\Delta\subset \E(B)\times \E(B)$ be a weak-$*$ closed, convex pre-order. Let $\phi$ be a $\Delta$-maximal state on $B$ and let $\mu$ be a measure in $\R_\phi$. Then, $\mu$ is concentrated on the pure $\Delta$-maximal states.
\end{theorem}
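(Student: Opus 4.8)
The plan is to argue by contradiction, exploiting the explicit description of the non-maximal states furnished by Lemma \ref{L:Borel}. By that lemma, the set of states that fail to be $\Delta$-maximal is $\bigcup_{n,m} K_{n,m}$, a countable union of weak-$*$ closed sets. Since $\mu$ is already concentrated on $\E_p(B)$ by the definition of $\R_\phi$, it suffices to prove that $\mu\left(\bigcup_{n,m} K_{n,m}\right)=0$, and by countable subadditivity it is enough to show $\mu(K_{n,m})=0$ for each fixed pair. So suppose toward a contradiction that $\mu(K_{n,m})>0$ for some $n,m$, and write $K=K_{n,m}$, $a=a_n$ and $\delta=1/m$.

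Following the mechanism behind Bishop--de Leeuw, the idea is to ``push the mass up'' the order on the bad set $K$ and thereby produce a state strictly above $\phi$. First I would record that
\[
R=\{(\omega,\psi)\in\E(B)\times\E(B):(\omega,\psi)\in\Delta,\ \psi(a)-\omega(a)\geq\delta\}
\]
is weak-$*$ closed, using that $\Delta$ is weak-$*$ closed, and that its projection onto the first coordinate is exactly $K$. Because $B$ is separable, $\E(B)$ is weak-$*$ compact and metrizable, so a measurable selection theorem (Jankov--von Neumann, or Kuratowski--Ryll-Nardzewski) yields a universally measurable map $\sigma:K\to\E(B)$ with $(\omega,\sigma(\omega))\in R$ for every $\omega\in K$. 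I would then define $\tau:\E_p(B)\to\E(B)$ by setting $\tau(\omega)=\sigma(\omega)$ for $\omega\in K$ and $\tau(\omega)=\omega$ otherwise; this map is $\mu$-measurable.

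Next I would form the state $\phi'=\int \tau(\omega)\,d\mu(\omega)$, namely the barycenter of the pushforward $\tau_*\mu$, which indeed lies in $\E(B)$ since $\E(B)$ is compact and convex. To see that $(\phi,\phi')\in\Delta$, consider the pushforward $\widetilde\mu$ of $\mu$ under the map $\omega\mapsto(\omega,\tau(\omega))$, a Borel probability measure on the compact convex set $\E(B)\times\E(B)$. By reflexivity of the pre-order $\Delta$ off $K$ together with the defining property of $\sigma$ on $K$, the measure $\widetilde\mu$ is concentrated on the closed convex set $\Delta$, so its barycenter lies in $\Delta$; and since the two coordinate marginals of $\widetilde\mu$ are $\mu$ and $\tau_*\mu$, with barycenters $\phi$ and $\phi'$ respectively, that barycenter is precisely $(\phi,\phi')$. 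Hence $(\phi,\phi')\in\Delta$. Finally, since $\tau(\omega)=\omega$ off $K$ while $\sigma(\omega)(a)-\omega(a)\geq\delta$ on $K$, I would compute
\[
\phi'(a)-\phi(a)=\int_K\bigl(\sigma(\omega)(a)-\omega(a)\bigr)\,d\mu(\omega)\geq \delta\,\mu(K)>0,
\]
so that $\phi'\neq\phi$. This contradicts the $\Delta$-maximality of $\phi$ and completes the argument.

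I expect the principal obstacle to be the measurable selection step: one must guarantee that a dominating state $\sigma(\omega)$ can be chosen in a fashion measurable enough to be integrated against $\mu$. This is exactly where separability of $B$, hence metrizability of $\E(B)$, is essential. It is also worth noting that both standing hypotheses on $\Delta$ are consumed in a single clean stroke in the barycenter argument: weak-$*$ closedness ensures $R$ and $K$ are well-behaved and that the barycenter of $\widetilde\mu$ remains in $\Delta$, while convexity is what makes the barycenter containment valid.
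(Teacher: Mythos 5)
Your argument is correct, and it reaches the required contradiction by a genuinely different technical route from the paper's. Both proofs begin identically: reduce via Lemma \ref{L:Borel} to showing $\mu(K_{n,m})=0$ for each $n,m$, then, assuming $\mu(K)>0$, ``push the mass up'' on $K$ to manufacture a state strictly above $\phi$ in $\Delta$. Where you invoke a measurable selection theorem to pick a dominating state $\sigma(\omega)$ for each $\omega\in K$ and integrate, the paper avoids selection entirely: it forms $\phi'=\int_K\omega\,d\mu(\omega)$, approximates the normalized functional $\frac{1}{\mu(K)}\phi'$ in the weak-$*$ topology by barycenters $\beta_i$ of finitely supported measures on $K$, uses convexity of $\Delta$ to produce a dominator $\psi_i$ of each $\beta_i$ with $\psi_i(a)-\beta_i(a)\geq\eps$, extracts a weak-$*$ cluster point $\psi$ using closedness of $\Delta$, and recombines via the explicit decomposition $\theta=(\phi-\phi')+\mu(K)\psi$. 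Your route buys a cleaner endgame --- the single barycenter computation replaces the net argument, the cluster-point extraction, and the case analysis needed to make sense of $\frac{1}{1-\mu(K)}(\phi-\phi')$ --- at the cost of importing descriptive-set-theoretic machinery; that import is legitimate here, since $R$ is closed, hence compact, in the compact metrizable space $\E(B)\times\E(B)$, so its sections over $K$ are nonempty compact sets and either Jankov--von Neumann or Kuratowski--Ryll-Nardzewski applies. The paper's route is more elementary and self-contained. The hypotheses are consumed in the same spirit: closedness and convexity of $\Delta$ enter your proof once, through the fact that the barycenter of a probability measure concentrated on a closed convex subset of a compact convex set lies in that subset, whereas the paper uses each hypothesis twice. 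One small point to make explicit if you write this up: define $\tau$ on all of $\E(B)$ rather than on $\E_p(B)$, since $\mu$ is a Borel measure on $\E(B)$ that is merely concentrated on the pure states.
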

\begin{proof}
First note that because $B$ is separable, the set $\E_p(B)$ is Borel measurable \cite[Corollary 3.3 and Lemma 4.1]{bishop1959}.  Let $N\subset \E_p(B)$ denote the set of pure states that are not $\Delta$-maximal. Then, $N$ is Borel measurable by Lemma \ref{L:Borel}. Our goal is to show that $\mu(N)=0$.

Assume for the sake of contradiction that $\mu(N)>0$. By Lemma \ref{L:Borel}, there is a self-adjoint element $a\in B$ and $\eps>0$ such that $\mu(K)>0$, where $K$ is the set of pure states $\omega$ on $B$ for which there is a state $\psi$ with $(\omega,\psi)\in \Delta$ and 
$
\omega(a)-\psi(a)\geq \eps.
$
Define a non-zero positive linear functional $\phi'$ on $B$ as
\[
\phi'(b)=\int_{K}\omega(b)d\mu(\omega), \quad b\in B.
\]
As is well known, we can find a net $(\alpha_i)$ of finitely supported Borel probability measures on $K$ such that 
\[
\lim_i \int_K fd\alpha_i=\frac{1}{\mu(K)}\int_K fd\mu, \quad f\in \rC(K).
\]
Correspondingly, for each $i$ we may define a state $\beta_i$ on $B$ as
\[
\beta_i(b)=\int_K \omega(b)d\alpha_i(\omega), \quad b\in B.
\]
Note then that $(\beta_i)$ converges to $\frac{1}{\mu(K)}\phi'$ in the weak-$*$ topology of $B^*$.
By definition of $K$, since each $\alpha_i$ is a finite convex combination of point masses on $K$ and $\Delta$ is convex, for each $i$ there is a state $\psi_i$ on $B$ such that $(\beta_i,\psi_i)\in \Delta$
while $\psi_i(a)-\beta_i(a)\geq \eps$. Let $\psi\in \E(B)$ be a weak-$*$ cluster point of $(\psi_i)$. Using that $\Delta$ is weak-$*$ closed, taking the weak-$*$ limit of a cofinal subnet of $(\beta_i,\psi_i)$, we find $(\frac{1}{\mu(K)}\phi',\psi)\in \Delta$ and 
\[
\psi(a)-\frac{1}{\mu(K)}\phi'(a)\geq \eps.
\]
In particular, $\psi\neq \frac{1}{\mu(K)}\phi'$.
Since $\phi$ is assumed to be $\Delta$-maximal, $\phi\neq \frac{1}{\mu(K)}\phi'$ and thus  $0<\mu(K)<1$.

Finally, set $\theta=(\phi-\phi')+\mu(K)\psi$. Then, $\theta$ is a convex combination of the state $\psi$ and the state $\phi''=\frac{1}{1-\mu(K)} (\phi-\phi')$, and hence is  state itself. We note that
\[
(\phi,\theta)=(1-\mu(K))(\phi'',\phi'')+\mu(K)\left(\frac{1}{\mu(K)}\phi',\psi\right)
\]
so that $(\phi,\theta)\in \Delta$ by convexity. Observe that
\[
\theta(a)-\phi(a)=\mu(K)\left (\psi(a)-\frac{1}{\mu(K)}\phi'(a)\right)\geq \mu(K)\eps
\]
so that $\phi\neq \theta$, which contradicts the fact that $\phi$ is $\Delta$-maximal. Consequently, $\mu(N)=0$ as desired.
\end{proof}

Next, we examine a converse to Theorem \ref{T:BdL}. 
For a Borel measurable subset $X\subset \E_p(B)$, we let $\Sigma_X$ denote the set of those states $\phi$ on $B$ for which there exists $\mu\in \R_\phi$ concentrated on $X$. Furthermore, we let $\Sigma^X$ denote the set of those states $\phi$ on $B$ for which every $\mu\in \R_\phi$ is concentrated on $X$. Plainly, we have $\Sigma^X\subset \Sigma_X$ as long as $\R_\phi$ is non-empty. It follows immediately from \cite[Lemma 4.1]{bishop1959} that if $\omega$ is a pure state on $B$, then $\R_\omega$ consists only of the point mass at $\omega$. Hence
\begin{equation}\label{Eq:Bauer}
\Sigma^X\cap \E_p(B)=\Sigma_X\cap \E_p(B)=X.
\end{equation}

Let $\Delta\subset \E(B)\times \E(B)$ be pre-order, and denote its maximal elements by $\max(\Delta)$.  Under natural conditions, we can show that maximal elements are always plentiful, at least for partial orders.

\begin{proposition}\label{P:maxexistence}
Let $B$ be a unital $\rC^*$-algebra. Let $\Delta\subset \E(B)\times \E(B)$ be a weak-$*$ closed partial order. For every state $\phi$ on $B$, there exists a $\Delta$-maximal state  $\theta$ such that $(\phi,\theta)\in \Delta$.
\end{proposition}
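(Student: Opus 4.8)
The plan is to apply Zorn's lemma to the ``up-set'' of $\phi$. Fix a state $\phi$ on $B$ and set
\[
P_\phi = \{\psi \in \E(B) : (\phi,\psi)\in \Delta\}.
\]
Since $\Delta$ is a partial order it is in particular reflexive, so $\phi \in P_\phi$ and $P_\phi$ is non-empty; it inherits a partial order from $\Delta$. The two structural facts that drive the argument are that $\E(B)$ is weak-$*$ compact (by Banach--Alaoglu, using that $B$ is unital) and that $\Delta$ is weak-$*$ closed by hypothesis. I would aim to produce a $\Delta$-maximal element inside $P_\phi$, and then check that maximality within $P_\phi$ already forces maximality in all of $\E(B)$.

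The key step is to verify that every chain $C$ in $(P_\phi,\Delta)$ has an upper bound lying in $P_\phi$. I regard $C$ as a net, directing it by $\Delta$ itself (a totally ordered set is directed). By weak-$*$ compactness of $\E(B)$, this net admits a subnet converging in the weak-$*$ topology to some $\theta\in \E(B)$. I claim $\theta$ is the desired upper bound. Fix $c_0\in C$; since a subnet is eventually beyond every index of $C$, we have $(c_0,c)\in \Delta$ eventually along the subnet. Passing to the weak-$*$ limit and using that $\Delta$ is closed yields $(c_0,\theta)\in \Delta$. As $c_0\in C$ was arbitrary, $\theta$ dominates every element of $C$. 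Moreover $\theta\in P_\phi$: choosing any $c_0\in C\subset P_\phi$ gives $(\phi,c_0)\in \Delta$ and $(c_0,\theta)\in \Delta$, whence $(\phi,\theta)\in \Delta$ by transitivity.

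With chains bounded above, Zorn's lemma furnishes a maximal element $\theta$ of $P_\phi$, meaning no state of $P_\phi$ lies strictly above $\theta$ with respect to $\Delta$. It remains to upgrade this to $\Delta$-maximality in all of $\E(B)$. Suppose $(\theta,\eta)\in \Delta$ for some state $\eta$. Since $(\phi,\theta)\in \Delta$, transitivity gives $(\phi,\eta)\in \Delta$, so $\eta\in P_\phi$; maximality of $\theta$ in $P_\phi$ then forces $\eta=\theta$. Hence $\theta$ is $\Delta$-maximal, and by construction $(\phi,\theta)\in \Delta$, which is exactly the assertion.

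I expect the only delicate point to be the chain-bounding step: one must phrase the chain as a net and handle the subnet carefully, so that the closedness of $\Delta$ can be invoked against each fixed index $c_0$ separately. Everything else --- nonemptiness of $P_\phi$, the transitivity bookkeeping, and the passage from $P_\phi$-maximality to global maximality --- is routine order-theoretic manipulation; note that antisymmetry enters only to make ``maximal'' mean ``nothing strictly above,'' while the compactness argument itself uses only that $\Delta$ is a weak-$*$ closed pre-order.
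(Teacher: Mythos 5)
Your proof is correct and follows essentially the same route as the paper: apply Zorn's lemma to the up-set $\{\psi : (\phi,\psi)\in\Delta\}$, bound each chain by a weak-$*$ cluster point obtained from compactness of $\E(B)$, use closedness of $\Delta$ to verify that the cluster point dominates every element of the chain, and then upgrade maximality in the up-set to global $\Delta$-maximality via transitivity. The only (harmless) cosmetic difference is that you explicitly invoke reflexivity to get nonemptiness of the up-set, which the paper leaves implicit.
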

\begin{proof}
Let $Z$ denote the set of states $\psi$ on $B$ such that $(\phi,\psi)\in \Delta$. Let $C\subset Z$ be a chain. There is a cofinal subnet  $\Lambda$ of $C$ that converges to some $\tau$ in the weak-$*$ topology. Since $\Delta$ is assumed to be weak-$*$ closed, we see that $(\phi,\tau)\in \Delta$, whence $\tau\in Z$. 

Next, fix $\psi\in C$. Then, $\Lambda_\psi=\{\lambda\in \Lambda: (\psi,\lambda)\in \Delta\}$ is a cofinal subnet of $\Lambda$. Using once again that $\Delta$ is weak-$*$ closed, we find
\[
(\psi,\tau)=\lim_{\lambda\in \Lambda_\psi} (\psi,\lambda)\in \Delta.
\]
We conclude that $\tau$ is an upper bound for $C$. By Zorn's lemma, $Z$ has a maximal element $\theta$. 

We claim that $\theta$ is in fact $\Delta$-maximal. To see this, assume that $\gamma$ is a state on $B$ such that $(\theta,\gamma)\in \Delta$. Then, $(\phi,\gamma)\in \Delta$ so that $\gamma\in Z$. Maximality of $\theta$ in $ Z$ then forces $\theta=\gamma$, thereby completing the proof.
\end{proof}

A Borel measurable subset $X\subset \E_p(B)$ will be called a $\Delta$-\emph{boundary} if 
\[
\max(\Delta)=\Sigma^X=\Sigma_X.
\]
We also set $\Omega=\max(\Delta)\cap \E_p(B)$, that is, $\Omega$ is the set of pure $\Delta$-maximal states. Recall that by  \cite[Corollary 3.3 and Lemma 4.1]{bishop1959} and Lemma \ref{L:Borel}, $\Omega$ is Borel measurable whenever $B$ is separable. We say that the order $\Delta$ is \emph{hyperrigid} if $\Sigma_\Omega\subset \max(\Delta)$. In other words, this says that a state of the form $\int \omega d\mu(\omega)$ is $\Delta$-maximal if $\mu$ is a Borel probability measure concentrated on $\Omega$. This condition is vacuously satisfied if $\Delta$ has simply no maximal elements.

The following is the main result of this section, and it shows that hyperrigidity and boundaries are closely related.

\begin{corollary}\label{C:bdryHR}
Let $B$ be a separable unital $\rC^*$-algebra. Let $\Delta\subset \E(B)\times \E(B)$ be a weak-$*$ closed, convex pre-order. Then, the following statements are equivalent.
\begin{enumerate}[{\rm(i)}]
\item There is a Borel measurable subset $X\subset \E_p(B)$ such that $ \max(\Delta)= \Sigma_X$.
\item The set $\Omega$ of pure $\Delta$-maximal states is a $\Delta$-boundary.
\item The order $\Delta$ is hyperrigid.
\end{enumerate}
\end{corollary}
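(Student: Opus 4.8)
The plan is to establish the cyclic chain of implications (ii) $\Rightarrow$ (i) $\Rightarrow$ (iii) $\Rightarrow$ (ii), with the natural candidate for the boundary throughout being the set $\Omega$ of pure $\Delta$-maximal states, which is Borel measurable by the remark preceding the statement (via \cite[Corollary 3.3 and Lemma 4.1]{bishop1959} and Lemma \ref{L:Borel}). The corollary should then amount to a careful chaining of set inclusions, since all the analytic substance has already been extracted in Theorem \ref{T:BdL} and in the existence of representing measures.

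For (ii) $\Rightarrow$ (i), I would simply take $X=\Omega$: if $\Omega$ is a $\Delta$-boundary then $\max(\Delta)=\Sigma_\Omega$ by definition, which is exactly (i). For (i) $\Rightarrow$ (iii), the key observation is that the set $X$ appearing in (i) is forced to coincide with $\Omega$. Indeed, intersecting the identity $\max(\Delta)=\Sigma_X$ with $\E_p(B)$ and invoking the pure-state formula \eqref{Eq:Bauer}, namely $\Sigma_X\cap \E_p(B)=X$, together with the definition $\max(\Delta)\cap \E_p(B)=\Omega$, yields $X=\Omega$. Hence $\max(\Delta)=\Sigma_\Omega$, which in particular gives $\Sigma_\Omega\subset \max(\Delta)$, i.e. hyperrigidity.

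The main implication is (iii) $\Rightarrow$ (ii). Here I would assemble the chain
\[
\max(\Delta)\subset \Sigma^\Omega\subset \Sigma_\Omega\subset \max(\Delta).
\]
The first inclusion is precisely the content of Theorem \ref{T:BdL}: every $\mu\in \R_\phi$ representing a $\Delta$-maximal state $\phi$ is concentrated on $\Omega$, so $\phi\in \Sigma^\Omega$. The middle inclusion $\Sigma^\Omega\subset \Sigma_\Omega$ holds because $\R_\phi$ is non-empty for every state when $B$ is separable \cite[Theorem 4.2]{bishop1959}, so that ``every representing measure is concentrated on $\Omega$'' genuinely entails ``some representing measure is''. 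The last inclusion is exactly the hyperrigidity hypothesis (iii). Squeezing the chain forces $\max(\Delta)=\Sigma^\Omega=\Sigma_\Omega$, which is the assertion that $\Omega$ is a $\Delta$-boundary.

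I do not expect a genuine obstacle at the level of this corollary, since the difficult analysis resides in Theorem \ref{T:BdL} and in Bishop's existence theorem for representing measures. The only point demanding real care is the identification $X=\Omega$ in the passage (i) $\Rightarrow$ (iii): this is where \eqref{Eq:Bauer} is indispensable, as it pins down an a priori arbitrary Borel boundary candidate to the canonical set of pure maximal states. Without this reduction the three notions would not obviously coincide, and the clean squeezing argument in (iii) $\Rightarrow$ (ii) would have no fixed target set $\Omega$ to close the loop.
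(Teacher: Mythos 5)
Your proposal is correct and follows essentially the same route as the paper: both rest on the identification $X=\Omega$ via \eqref{Eq:Bauer} and the squeeze $\max(\Delta)\subset \Sigma^\Omega\subset \Sigma_\Omega\subset \max(\Delta)$ furnished by Theorem \ref{T:BdL} and the nonemptiness of $\R_\phi$. The only difference is cosmetic -- you arrange the implications as a cycle (ii) $\Rightarrow$ (i) $\Rightarrow$ (iii) $\Rightarrow$ (ii), whereas the paper proves (i) $\Rightarrow$ (ii) directly and notes (ii) $\Rightarrow$ (i)+(iii) is trivial.
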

\begin{proof}
(i) $\Rightarrow$ (ii): Using the assumption along with \eqref{Eq:Bauer}, we find
\[
\Omega=\max(\Delta)\cap \E_p(B)=\Sigma_X\cap \E_p(B)=X.
\]
Therefore, $\max(\Delta)=\Sigma_\Omega$. By virtue of Theorem \ref{T:BdL}, we then find
\[
\Sigma^\Omega\subset \Sigma_\Omega=\max(\Delta)\subset \Sigma^\Omega
\]
which implies that $\Omega$ is a $\Delta$-boundary.

(ii) $\Rightarrow$ (i) + (iii):  This is trivial.

(iii) $\Rightarrow$ (ii): Assume that $\Delta$ is hyperrigid, that is, $\Sigma_\Omega\subset \max(\Delta)$. Invoking Theorem \ref{T:BdL}, we see that $\max(\Delta)\subset \Sigma^\Omega$, so that $\max(\Delta)=\Sigma^\Omega=\Sigma_\Omega$ and $\Omega$ is a $\Delta$-boundary. 
\end{proof}

\section{The dilation order}\label{S:dilordercone}

The main driving force of this paper is to ascertain the hyperrigidity (in the sense of Section \ref{S:boundaries}) of the so-called dilation order relative to an operator system, as introduced in \cite{DK2021},\cite{DK2019}. The goal of this section is to give a precise definition of this order, and to establish some of its properties. The results therein all rely heavily on the technical machinery of non-commutative function theory developed in \cite{DK2019}. In an effort to keep the exposition light, we only recall the bare minimum from that paper. The interested reader should consult \cite{DK2019} to fill in the gaps as needed.

\subsection{Definition}

Let $B$ be a unital $\rC^*$-algebra and let $S\subset B$ be an operator system such that $\rC^*(S)=B$. Let $\E(B)$ denote the state space of $B$. Let $\phi\in \E(B)$. Recall that by a \emph{representation} of $\phi$ we mean a triple $(\pi,H,\xi)$ consisting of a Hilbert space $H$, a unital $*$-representation $\pi:B\to B(H)$ and a unit vector $\xi\in H$ that satisfies
\[
\phi(b)=\langle \pi(b)\xi,\xi\rangle,\quad b\in B.
\]
When $\xi$ happens to be a cyclic vector for $\pi$, then the representation $(\pi,H,\xi)$ is unitarily equivalent to the GNS representation of $\phi$. In general, we will need to consider non-cyclic representations as well. 

Next, we define the subset $\D(S,B)\subset \E(B)\times \E(B)$ to consist of those pairs of states $(\phi,\psi)$ for which there are representations $(\pi,H,\xi)$ and $(\sigma,K,\eta)$ of $\phi$ and $\psi$ respectively, along with an isometry $V:H\to K$ satisfying $V\xi=\eta$, such that
\[
\pi(a)=V^*\sigma(a)V, \quad a\in S.
\]
Following \cite{DK2019} and \cite{DK2021}, we call  $\D(S,B)$ the \emph{dilation order relative to $S$ in $B$}. It is readily seen that $\D(S,B)$ is a preorder on $\E(B)$. 
It is in fact a partial order; this follows from \cite[Proposition 7.2.8 and Theorem 8.5.1]{DK2019}.

For the purposes of Arveson's hyperrigidity conjecture, the importance of the dilation order  is made manifest in the following fact, which follows from  \cite[Proposition 5.2.3 and Theorem 8.3.7]{DK2019}.

\begin{theorem}\label{T:Dmaxuep}
Let $B$ be a unital $\rC^*$-algebra and let $S\subset B$ be an operator system such that $\rC^*(S)=B$. Let $\pi:B\to B(H)$ be a unital $*$-representation with unit cyclic vector $\xi$. Then, the following statements are equivalent.
\begin{enumerate}[{\rm (i)}]
\item  $\pi$ has the unique extension property with respect to $S$.
\item The state 
$
b\mapsto \langle \pi(b)\xi,\xi\rangle
$
is $\D(S,B)$-maximal.
\end{enumerate}
\end{theorem}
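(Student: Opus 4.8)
The plan is to prove the equivalence of the unique extension property (UEP) for a cyclic representation $\pi$ with the $\D(S,B)$-maximality of its associated state $\phi$, by unwinding both conditions and matching them term by term. The key is that both notions are governed by the same data: completely positive maps that agree with $\pi$ on the operator system $S$. First I would recall, using the cited results \cite[Proposition 5.2.3 and Theorem 8.3.7]{DK2019}, that the UEP is equivalent to a \emph{maximality} condition on the representation itself, phrased in terms of dilations. The essential dictionary is that Arveson's extension theorem together with the Stinespring construction turns any unital completely positive (ucp) map $\psi:B\to B(H)$ agreeing with $\pi$ on $S$ into a Stinespring dilation $\sigma:B\to B(K)$ with an isometry $V:H\to K$ such that $\psi(b)=V^*\sigma(b)V$ and, crucially, $\psi|_S=\pi|_S$ forces $V^*\sigma(a)V=\pi(a)$ for $a\in S$.

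The heart of the argument is to translate this vector-free, map-level statement into the state-level language of the dilation order. For the forward direction (i) $\Rightarrow$ (ii), I would assume $\pi$ has the UEP and take any state $\psi$ with $(\phi,\psi)\in \D(S,B)$; by definition this yields representations $(\pi,H,\xi)$, $(\sigma,K,\eta)$ and an isometry $V$ with $V\xi=\eta$ and $\pi(a)=V^*\sigma(a)V$ for $a\in S$. Then the compression $b\mapsto V^*\sigma(b)V$ is a ucp map on $B$ agreeing with $\pi$ on $S$, so the UEP forces $V^*\sigma(b)V=\pi(b)$ for all $b\in B$. Evaluating at the cyclic vector $\xi$ gives $\psi(b)=\langle \sigma(b)\eta,\eta\rangle = \langle \sigma(b)V\xi, V\xi\rangle = \langle V^*\sigma(b)V\xi,\xi\rangle = \langle \pi(b)\xi,\xi\rangle = \phi(b)$, so $\psi=\phi$ and $\phi$ is maximal. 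The cyclicity of $\xi$ is what lets a vector-state equality upgrade to an equality of representations, which is the linchpin here.

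For the converse (ii) $\Rightarrow$ (i), I would assume $\phi$ is $\D(S,B)$-maximal and let $\psi:B\to B(H)$ be any ucp map agreeing with $\pi$ on $S$; I want to show $\psi=\pi$. Applying Stinespring to $\psi$ produces $(\sigma,K,V)$ with $\psi(b)=V^*\sigma(b)V$ and $V$ an isometry; setting $\eta=V\xi$ and letting $\psi'$ be the state $b\mapsto \langle \sigma(b)\eta,\eta\rangle$, the data $(\sigma,K,\eta)$ together with $V$ witnesses $(\phi,\psi')\in \D(S,B)$, since on $S$ we have $V^*\sigma(a)V=\psi(a)=\pi(a)$. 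Maximality gives $\psi'=\phi$, i.e.\ $\langle \sigma(b)\eta,\eta\rangle = \langle \pi(b)\xi,\xi\rangle$ for all $b$. The task is then to promote this single vector-state equality to the operator equality $\psi(b)=\pi(b)$ on all of $H$, not just at $\xi$. This is the step I expect to be the main obstacle: a single scalar equality of states is genuinely weaker than equality of the compressions, so a polarization/cyclicity argument is needed. Concretely, one uses the cyclicity of $\xi$ to compute $\langle \psi(b)\pi(a)\xi,\pi(c)\xi\rangle$ for $a,b,c\in B$ by rewriting it via the states of elements of the form $c^*ba$ and exploiting that $\psi$ and $\pi$ already agree on a multiplicative-enough structure (this is exactly the content packaged in \cite[Theorem 8.3.7]{DK2019}, where the UEP is characterized through maximality). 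I would lean on that cited machinery rather than reconstruct the polarization by hand, citing \cite[Proposition 5.2.3 and Theorem 8.3.7]{DK2019} to close the gap between vector-state maximality and the full UEP.
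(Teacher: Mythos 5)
The paper itself offers no argument for this theorem beyond the citation to \cite[Proposition 5.2.3 and Theorem 8.3.7]{DK2019}, so your decision to route the hard step through the same references is consistent with what the authors do, and your proposal is in fact more detailed than the paper. Your direction (i) $\Rightarrow$ (ii) is essentially correct, with one definitional subtlety: the relation $(\phi,\psi)\in\D(S,B)$ only supplies \emph{some} representation $(\pi_0,H_0,\xi_0)$ of $\phi$ witnessing the dilation, not necessarily the given cyclic triple $(\pi,H,\xi)$, and since $\xi_0$ need not be cyclic for $\pi_0$ you cannot apply the unique extension property of $\pi$ to $\pi_0$ directly. The fix is routine: cut $\pi_0$ and the isometry $V$ down to the reducing subspace $\overline{\pi_0(B)\xi_0}$, identify the restriction with the GNS triple $(\pi,H,\xi)$, and note that the compression identity on $S$ survives; after that your argument goes through verbatim.

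The mechanism you sketch for (ii) $\Rightarrow$ (i), however, would not work as literally described: computing $\langle \psi(b)\pi(a)\xi,\pi(c)\xi\rangle$ ``via the states of elements of the form $c^*ba$'' presupposes some multiplicativity of $\psi$, but $\psi$ is merely unital completely positive and agrees with $\pi$ only on the operator system $S$, which is not an algebra; the single scalar identity $\langle\psi(b)\xi,\xi\rangle=\langle\pi(b)\xi,\xi\rangle$ does not polarize on its own. A correct elementary completion is available. From the maximality conclusion $\psi'=\phi$, the assignment $W\pi(b)\xi:=\sigma(b)V\xi$ is well defined and extends to an isometry $W:H\to K$ intertwining $\pi$ and $\sigma$, because $\|\sigma(b)V\xi\|^2=\psi'(b^*b)=\phi(b^*b)=\|\pi(b)\xi\|^2$. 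One then proves $V=W$ by induction on the length of words in $S$: if $V$ and $W$ already agree on $\pi(b)\xi$ and $a\in S$ (so $a^*\in S$), then $\langle V\pi(ab)\xi, W\pi(ab)\xi\rangle=\langle V^*\sigma(a^*)V\pi(ab)\xi,\pi(b)\xi\rangle=\langle\pi(a^*ab)\xi,\pi(b)\xi\rangle=\|\pi(ab)\xi\|^2$, and since both vectors have norm $\|\pi(ab)\xi\|$, the equality case of Cauchy--Schwarz forces $V\pi(ab)\xi=W\pi(ab)\xi$. As $S$ generates $B$ and $\xi$ is cyclic, $V=W$ on a dense subspace of $H$, whence $\psi=V^*\sigma(\cdot)V=W^*\sigma(\cdot)W=\pi$. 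Deferring instead to \cite{DK2019} is legitimate (it is exactly what the paper does), but the polarization heuristic as you state it should be replaced by an argument of this kind.
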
 

Using the terminology from Section \ref{S:boundaries}, we can now reformulate Arveson's conjecture.

\begin{corollary}\label{C:Arvreform}
Let $B$ be a separable unital $\rC^*$-algebra and let $S\subset B$ be an operator system such that $\rC^*(S)=B$. Assume that every pure state on $B$ is $\D(S,B)$-maximal. Then, the following statements are equivalent.
\begin{enumerate}[{\rm (i)}]
\item The operator system $S$ is hyperrigid in $B$.
\item The pre-order $\D(S,B)$ is hyperrigid.
\item There exists a $\D(S,B)$-boundary.
\end{enumerate}
\end{corollary}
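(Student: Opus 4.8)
The plan is to assemble this corollary from the results already developed, treating Arveson's original conjecture via the dilation-order reformulation supplied by Theorem \ref{T:Dmaxuep}, and the equivalence of hyperrigidity and boundaries via Corollary \ref{C:bdryHR}. First I would verify that the three notions of hyperrigidity in play are compatible under the standing hypothesis that \emph{every} pure state is $\D(S,B)$-maximal. Under that hypothesis the set $\Omega=\E_p(B)\cap\max(\D(S,B))$ of pure $\D(S,B)$-maximal states is simply all of $\E_p(B)$, so the order-theoretic hyperrigidity of statement (ii) reads: every state of the form $\int\omega\,d\mu(\omega)$ with $\mu\in\R_\phi$ is $\D(S,B)$-maximal. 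Since by \cite[Theorem 4.2]{bishop1959} every state admits such a representing measure in the separable setting, this is precisely the assertion that \emph{all} states of $B$ are $\D(S,B)$-maximal.

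Next I would connect this to statement (i). By Theorem \ref{T:Dmaxuep}, a cyclic $*$-representation $\pi$ with cyclic vector $\xi$ has the unique extension property with respect to $S$ if and only if the associated state $b\mapsto\langle\pi(b)\xi,\xi\rangle$ is $\D(S,B)$-maximal. Via the GNS correspondence, cyclic representations are in bijection with states, so $S$ is hyperrigid in the classical sense --- every $*$-representation has the unique extension property, equivalently (since the property passes to direct sums and subrepresentations, as recalled in the introduction) every cyclic $*$-representation does --- exactly when every state on $B$ is $\D(S,B)$-maximal. This matches the reformulation of (ii) obtained in the previous paragraph, giving the equivalence (i) $\Leftrightarrow$ (ii).

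Finally, the equivalence (ii) $\Leftrightarrow$ (iii) is a direct application of Corollary \ref{C:bdryHR} with $\Delta=\D(S,B)$. To invoke that corollary I must check its hypotheses: $B$ is separable by assumption, $\D(S,B)$ is a pre-order (indeed a partial order, as noted after its definition), and it is weak-$*$ closed and convex. The weak-$*$ closedness and convexity are the two structural facts about $\D(S,B)$ that the corollary needs; I would confirm these are available --- either as properties already recorded for the dilation order or verified directly from the dilation-order definition via a routine cluster-point argument on the defining isometries and representations. Granting them, Corollary \ref{C:bdryHR} yields that $\D(S,B)$ is hyperrigid if and only if a $\D(S,B)$-boundary exists, completing the chain.

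The main obstacle I anticipate is not any single deep step but rather the careful bookkeeping in the first two paragraphs: I must ensure the order-theoretic notion of hyperrigidity from Section \ref{S:boundaries} genuinely collapses to ``all states are maximal'' under the hypothesis $\Omega=\E_p(B)$, and that this in turn lines up cleanly with the representation-theoretic hyperrigidity of (i) through the GNS dictionary and Theorem \ref{T:Dmaxuep}. The separability hypothesis is doing real work here, both to guarantee existence of representing measures \cite[Theorem 4.2]{bishop1959} and to legitimize the application of Corollary \ref{C:bdryHR}; I would be careful to flag where it is used.
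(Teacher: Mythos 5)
Your proposal is correct and follows essentially the same route as the paper: under the standing hypothesis $\Omega=\E_p(B)$, both (i) and (ii) collapse to ``every state is $\D(S,B)$-maximal'' via Theorem \ref{T:Dmaxuep} together with the GNS correspondence and \cite[Theorem 4.2]{bishop1959}, and (ii) $\Leftrightarrow$ (iii) is Corollary \ref{C:bdryHR}. Your flagged concern about weak-$*$ closedness and convexity of $\D(S,B)$ is well placed --- the paper secures these hypotheses (up to sharing the same maximal elements) through Proposition \ref{P:dilordercone} and Theorem \ref{T:dilmaxinv} rather than by a direct cluster-point argument on dilations.
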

\begin{proof}
(i)$\Rightarrow$(ii): By assumption, every state on $B$ is $\D(S,B)$-maximal, so trivially $\D(S,B)$ is hyperrigid.

(ii)$\Rightarrow$(i): By assumption, the set $\Omega$ of pure $\D(S,B)$-maximal states coincides with the set of pure states on $B$. Thus, $\Sigma_\Omega$ is the entire state space by   \cite[Theorem 4.2]{bishop1959}. Assuming that $\D(S,B)$ is hyperrigid, we thus conclude that every state on $B$ is $\D(S,B)$-maximal.

(ii)$\Leftrightarrow$(iii): This is contained in Corollary \ref{C:bdryHR}.
\end{proof}

\subsection{The maximal $\rC^*$-cover}

The dilation order does not depend only on the operator system, but also on the ambient $\rC^*$-algebra. For the ``maximal" representation of the operator system, the dilation order admits an alternative description, more in line with the classical Choquet order alluded to in the introduction and examined in \cite{DK2021}.  We recall some of the details underlying this non-trivial fact.

Operator systems can be defined abstractly with no mention of an ambient $\rC^*$-algebra or concrete representation on Hilbert space by means of the Choi--Effros theorem \cite[Theorem 13.1]{paulsen2002}. The following concept is useful in studying the flexibility afforded by this coordinate-free approach.

Let $S$ be an operator system.  A \emph{$\rC^*$-cover} of $S$ is a pair $(B,\theta)$ consisting of a unital $\rC^*$-algebra $B$ and a unital completely isometric map $\theta:S\to B$ such that $B=\rC^*(\theta(S))$.  It is known \cite{KW1998} that there exists a maximal $\rC^*$-cover $(A,j)$. More precisely, this $\rC^*$-cover has the property that, given any other $\rC^*$-cover $(B,\theta)$, there is a surjective unital $*$-homomorphism $\pi:A\to B$ such that $\pi\circ j=\theta$. It is customary to use the notation $A=\rC^*_{\max}(S)$ for this maximal $\rC^*$-cover. Further, it is well known that $\rC^*_{\max}(S)$ satisfies a formally stronger condition, which we record next for later reference.

\begin{lemma}\label{L:Cstarmax}
Let $S$ be an operator system and let $\phi:S\to B(H)$ be a unital completely contractive map. Then, there is a unital $*$-representation $\widehat\phi:\rC^*_{\max}(S)\to B(H)$ such that $\widehat\phi\circ j=\phi$.
\end{lemma}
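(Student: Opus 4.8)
The plan is to reduce the statement to the universal property of the maximal $\rC^*$-cover recalled just above, which a priori only applies to genuine $\rC^*$-covers, i.e.\ to unital complete isometries whose range generates. Since the given map $\phi$ need be neither isometric nor generating, the idea is to manufacture a $\rC^*$-cover out of $\phi$ by pairing it with the canonical embedding $j$.

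Concretely, write $A=\rC^*_{\max}(S)$ and consider the direct sum map $\rho=j\oplus\phi:S\to A\oplus B(H)$, given by $\rho(s)=(j(s),\phi(s))$. First I would verify that $\rho$ is a unital complete isometry: unitality is immediate, and for $x\in M_n(S)$ one has $\|\rho^{(n)}(x)\|=\max\{\|j^{(n)}(x)\|,\|\phi^{(n)}(x)\|\}$; since $j$ is completely isometric and $\phi$ completely contractive, this maximum equals $\|x\|$. Setting $B=\rC^*(\rho(S))\subset A\oplus B(H)$, the pair $(B,\rho)$ is then a $\rC^*$-cover of $S$.

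Next I would invoke the universal property: there is a surjective unital $*$-homomorphism $\pi:A\to B$ with $\pi\circ j=\rho$. Let $q:A\oplus B(H)\to B(H)$ be the coordinate projection onto the second factor, which is a unital $*$-homomorphism, and set $\widehat\phi=q|_B\circ\pi:A\to B(H)$. As a composition of unital $*$-homomorphisms, $\widehat\phi$ is a unital $*$-representation. Finally, I would check the intertwining relation: since $\rho(S)\subset B$ and $q(\rho(s))=\phi(s)$, we obtain $\widehat\phi\circ j=q|_B\circ\pi\circ j=q|_B\circ\rho=\phi$, as required.

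I do not expect a serious obstacle here; the only real content is the observation that padding $\phi$ with $j$ converts an arbitrary unital completely contractive map into a bona fide $\rC^*$-cover, after which everything follows formally from the stated universal property. It is worth noting that this argument requires no appeal to complete positivity or to Arveson's extension theorem, since complete contractivity of $\phi$ already suffices to make $\rho$ a complete isometry.
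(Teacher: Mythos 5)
Your proposal is correct and follows essentially the same route as the paper: both form the unital complete isometry $j\oplus\phi$ into $\rC^*_{\max}(S)\oplus B(H)$, invoke the universal property of the maximal $\rC^*$-cover on the $\rC^*$-algebra generated by its range, and compose with the projection onto the second coordinate. Your write-up merely spells out the matrix-norm computation $\|(j\oplus\phi)^{(n)}(x)\|=\max\{\|j^{(n)}(x)\|,\|\phi^{(n)}(x)\|\}=\|x\|$ that the paper leaves as ``clearly.''
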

\begin{proof}
Consider the map $j\oplus \phi:S\to \rC^*_{\max}(S)\oplus B(H)$, which is clearly unital and completely isometric. Define also the unital $*$-homomorphism $\pi_0:\rC^*_{\max}(S)\oplus B(H)\to B(H)$ as the projection onto the second component. Now, there is a unital $*$-homomorphism $\sigma:\rC^*_{\max}(S)\to \rC^*((j\oplus \phi)(S))$ such that $\sigma\circ j=j\oplus \phi$. It thus suffices to put $\widehat\phi=\pi_0\circ \sigma.$
\end{proof}

\subsection{Non-commutative functions}
Our next goal is to give an alternative description of the dilation order. Let us first set some notation regarding matrices indexed by sets with infinite cardinality.

 Let $H$ be a Hilbert space and let  $S\subset B(H)$ be an operator system. Let $m$ be a cardinal number. Let $H^{(m)}=\bigoplus_{n<m}H$, where the direct sum is taken over all cardinal numbers $n<m$. 
In standard fashion, a bounded linear operator $T$ on $H^{(m)}$ corresponds to a matrix $[t_{i,j}]_{i,j<m}$ with $t_{i,j}:H\to H$ such that 
\[
\sup\{ \|[t_{i,j}]_{i,j\in I}\|: I \text{ finite subset of } \{n<m\}\}
\]
is finite. Here, given a finite subset $I$ of $\{n<m\}$, we identify $[t_{i,j}]_{i,j\in I}$ with an operator on the Hilbert space $\bigoplus_{n\in I}H$. Accordingly, we may view $\bM_m(S)\subset B(H^{(m)})$ as those matrices with entries in $S$ for which the corresponding collection of finite submatrices is bounded. In particular, $\bM_m(S)$ is another operator system.

Let $\kappa$ be an infinite cardinal greater than the linear dimension of $S$.  Given a cardinal number $n\leq \kappa$, we fix a Hilbert space $H_{n}$ of dimension $n$.  We simply write $\bC$ for $H_1$ and $B(H_1)$. 

We let $K_n$ denote the set of all unital completely positive maps from $S$ into $B(H_n)$. Thus, $K_n$ is a convex subset of the unit ball of the space of all completely bounded maps from $S$ into $B(H_n)$. In addition, each $K_n$ compact in the topology of pointwise weak-$*$ convergence. 

The collection $K=(K_n)_{n\leq\kappa}$ enjoys some additional compatibility relations betweens each of its levels $K_n$, which makes it an \emph{nc convex set}; see \cite[Definition 2.2.1 and Example 2.2.6]{DK2019} for details.

Fix a cardinal $m\leq \kappa$. An \emph{nc function} $F:K\to \bM_m(\bC)$ is a collection  of functions
\[
F_n:K_n\to \bM_m(B(H_n)), \quad n\leq \kappa
\]
satisfying some natural compatibility and equivariance conditions; see \cite[Definition 4.2.1]{DK2019}. 
For each $n\leq \kappa$, we can write $F_n$ as a matrix $[f^{(i,j)}_{n}]_{i,j<m}$, where $f^{(i,j)}_{n}:K_n\to B(H_n)$. Correspondingly, each \[F^{(i,j)}=(f^{(i,j)}_{n})_{n\leq \kappa}:K\to \bC\] is an nc function.

If $F$ is self-adjoint, we say that it is  \emph{convex} if, for each cardinal $n\leq\kappa$, each pair of elements $\phi,\psi\in K_n$ and each number $0\leq t\leq 1$, we have
\[
F_n(t\phi+(1-t)\psi)\leq tF_n(\phi)+(1-t)F_n(\psi) \quad \text{ in } \bM_m(B(H_n)).
\]

An nc function $f:K\to\bC$ is said to be \emph{continuous} if each $f_n:K_n\to B(H_n)$ is continuous, where $B(H_n)$ is endowed with the $\sigma$-strong$^*$ topology \cite[Section II.2]{takesaki2002}. 
The set of all continuous nc functions from $K$ into $\bC$ is a unital $\rC^*$-algebra, which we will denote by $\A$.  By virtue of  \cite[Theorem 4.4.3]{DK2019}  there is a $*$-isomorphism
$
\Phi:\rC^*_{\max}(S)\to \A
$
such that
\begin{equation}\label{Eq:ncfunct}
[\Phi(b)](\phi)=\widehat\phi(b)
\end{equation}
for each element $b\in \rC^*_{\max}(S)$, each map $\phi\in K_n$ and each cardinal $n\leq \kappa.$
Therefore, any continuous nc function $F=(F^{(i,j)})_{i,j< m}: K\to \bM_m(\bC)$ gives rise to a matrix $[\Phi^{-1}(F^{(i,j)})]_{i,j< m}$ with entries in $\rC^*_{\max}(S)$.

\subsection{The cone $\Xi$}

We define $\Xi\subset \rC^*_{\max}(S)$ to be the set consisting of elements of the form 
\[\sum_{i,j\in I}c_j \ol{c_i}\Phi^{-1}(F^{(i,j)})\]
where $F=(F^{(i,j)})_{i,j<m}:K\to \bM_m(\bC)$ is a convex continuous nc function  for some cardinal $m\leq \kappa$, $I$ is some finite subset of $\{n<m\}$, and $\{c_i:i\in I\}$ is a set of complex numbers.

\begin{lemma}\label{L:convcone}
The set $\Xi$ is a cone in $\rC^*_{\max}(S)$. 
\end{lemma}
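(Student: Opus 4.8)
The plan is to verify the two defining closure properties of a convex cone, namely stability under multiplication by nonnegative reals and under addition. Since $\Phi\colon \rC^*_{\max}(S)\to \A$ is a $*$-isomorphism, hence in particular a linear bijection, and since each element of $\Xi$ has the form $\Phi^{-1}(q_{F,c})$ where
\[
q_{F,c}=\sum_{i,j\in I}c_j\ol{c_i}F^{(i,j)}
\]
is a continuous (scalar-valued) nc function on $K$ attached to a convex continuous nc function $F\colon K\to\bM_m(\bC)$ and a finitely supported family $c=(c_i)_{i\in I}$ of complex numbers, it suffices to show that the collection of all such $q_{F,c}$ is closed under addition and under scaling by $\lambda\geq 0$. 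I first note that $0\in\Xi$, obtained by taking $c=0$.

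For positive homogeneity, fix $\lambda\geq 0$ and $q_{F,c}$. Replacing $c$ by the finitely supported family $\sqrt{\lambda}\,c$ gives $q_{F,\sqrt{\lambda}c}=\lambda\, q_{F,c}$, so $\lambda\Phi^{-1}(q_{F,c})\in\Xi$. (Equivalently, one may keep $c$ fixed and observe that $\lambda F$ is again convex, the defining matrix inequality being preserved upon multiplication by $\lambda\geq 0$.) For additivity, the main step, I take two elements $\Phi^{-1}(q_{F,c})$ and $\Phi^{-1}(q_{G,d})$ with $F\colon K\to\bM_m(\bC)$ and $G\colon K\to\bM_{m'}(\bC)$ convex continuous nc functions, and form the block-diagonal direct sum $H=F\oplus G$, defined at each level by $H_n(\phi)=F_n(\phi)\oplus G_n(\phi)$ inside $\bM_{m''}(B(H_n))$ with $m''=m+m'$. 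Since $\kappa$ is infinite and $m,m'\leq\kappa$, we have $m''\leq\kappa$, so $H$ is a matrix nc function of permitted size. It is continuous because every entry of $H$ is an entry of $F$, an entry of $G$, or the zero function; and it is convex because $A_1\leq B_1$ and $A_2\leq B_2$ force $A_1\oplus A_2\leq B_1\oplus B_2$, so the levelwise convexity inequalities for $F$ and $G$ combine into the one for $H$. Letting $c\oplus d$ denote the concatenated family, supported on the finite union of the index set of $c$ with the appropriately shifted index set of $d$, the vanishing of the off-diagonal blocks of $H$ kills all cross terms, giving
\[
q_{H,\,c\oplus d}=q_{F,c}+q_{G,d},
\]
so that $\Phi^{-1}(q_{F,c})+\Phi^{-1}(q_{G,d})=\Phi^{-1}(q_{H,\,c\oplus d})\in\Xi$.

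The one genuinely technical point, and the step I expect to require care, is confirming that the block-diagonal assembly $H=F\oplus G$ again satisfies the compatibility and equivariance axioms defining an nc function in \cite[Definition 4.2.1]{DK2019}. This is routine rather than deep: those axioms are imposed levelwise in the domain variable $n$ and relate the maps $F_n$ across different $n$, and such relations are manifestly respected by block-diagonal combination in the matrix coordinate, so they transfer from $F$ and $G$ to $H$. The accompanying cardinal bookkeeping, namely $m+m'\leq\kappa$ and the re-indexing of the combined matrix coordinates as a finite subset of $\{n<m''\}$, is equally straightforward once one uses that $\kappa$ is infinite. With these verifications in hand, $\Xi$ is closed under addition and nonnegative scalar multiplication, hence is a cone in $\rC^*_{\max}(S)$.
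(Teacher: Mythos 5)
Your proof is correct and follows essentially the same route as the paper: form the block-diagonal direct sum $H=F\oplus G$ (which remains a convex continuous nc function), absorb nonnegative scalars into the coefficient families via square roots, and observe that the vanishing off-diagonal blocks reduce the combined quadratic form to the sum of the two original ones. The paper merely merges your two closure checks into a single computation for $s\xi_1+t\xi_2$ with $s,t\geq 0$.
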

\begin{proof}
Let $m$ and $n$ be two cardinal numbers at most equal to $\kappa$. Let $F:K\to \bM_m(\bC)$ and $G:K\to \bM_n(\bC)$ be two convex continuous nc functions. Define a function $H:K\to \bM_{m+n}(\bC)$ as
\[
H(x)=F(x)\oplus G(x), \quad x\in K.
\]
It is easily verified that $H$ is still a convex continuous nc function. 

Next, let $I\subset \{r<m\}$ and $J\subset \{r<n\}$ be two finite subsets, and correspondingly let $\{c_i:i\in I\}$ and $\{d_j:j\in J\}$ be two finite subsets of complex numbers. Let $s,t\geq 0$. It is easily verified that there exists a finite subset $\Lambda\subset \{r<n+m\}$ and finitely many complex numbers $\{\alpha_\lambda :\lambda\in \Lambda\}$ such that
\begin{align*}
&s\left(\sum_{i,i'\in I}c_{i'} \ol{c_i}\Phi^{-1}(F^{(i,i')}) \right)+t\left(\sum_{j,j'\in J}d_{j'} \ol{d_j}\Phi^{-1}(G^{(j,j')}) \right)\\
&=\sum_{\lambda,\lambda'\in \Lambda} \alpha_{\lambda'}\ol{\alpha_\lambda} \Phi^{-1}(H^{(\lambda,\lambda')}).
\end{align*}
Hence, $\Xi$ is indeed a cone.
\end{proof}

Next, we wish to gain a more concrete understanding of the elements of $\Xi$. For this purpose, we will consider restrictions of nc functions on $K$ to the first level $K_1$, which is simply the state space of $S$. 

More precisely, consider the unital $*$-homomorphism $\rho:\A\to \rC(K_1)$ defined as
\[
[\rho(f)](\phi)=f_1(\phi)
\]
for each continuous nc function $f=(f_n):K\to \bC$ and each $\phi\in K_1$.
Define also the natural evaluation map $\eps:S\to \rC(K_1)$ as
\[
[\eps(s)](\phi)=\phi(s)
\]
for each $s\in S$ and $\phi\in K_1.$
This is a unital completely contractive map, so by Lemma \ref{L:Cstarmax}, there is a unique unital surjective $*$-homomorphism $q:\rC^*_{\max}(S)\to \rC(K_1)$ such that $q\circ j=\eps$ on $S$. For $s\in S$ and $\phi\in K_1$, using \eqref{Eq:ncfunct} we see that
\[
[(\rho\circ \Phi\circ j)(s)](\phi)=[\Phi(j(s))]_1(\phi)=\widehat\phi(j(s))=\phi(s)=[\eps(s)](\phi)
\]
so that $\rho\circ \Phi\circ j=\eps=q\circ j$ on $S$. This immediately implies that
\begin{equation}\label{Eq:restriction}
\rho\circ \Phi=q.
\end{equation}
We can now give a fairly concrete description of the restrictions to $K_1$ of the elements in $\Xi$.

\begin{proposition}\label{P:maxstable}
Let $S$ be a operator system with state space $L$. Let $\Gamma\subset \rC(L)$ denote the closed cone of continuous convex functions on $L$. Let $\eps:S\to \rC(L)$ be the evaluation map, and let $q:\rC^*_{\max}(S)\to \rC(L)$ denote the surjective unital $*$-homomorphism satisfying $q\circ j=\eps$ on $S$. Then, $q(\Xi)\subset \Gamma$ and $q(\Xi)$ contains all restrictions to $L$ of affine weak-$*$ continuous functions on $S^*$. Furthermore, $\Gamma$ is the smallest closed cone of $\rC(L)$ stable under maxima and  containing $q(\Xi)$.
\end{proposition}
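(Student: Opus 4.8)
The plan is to establish the three assertions separately, the last (minimality) being the substantial one. The unifying tool throughout is the identity \eqref{Eq:restriction}, namely $q=\rho\circ\Phi$, which converts every statement about $q$ on $\Xi$ into a statement about nc functions evaluated at the first level $K_1=L$, where $\rho$ simply records the value $[\rho(g)](\phi)=g_1(\phi)$. For the inclusion $q(\Xi)\subset\Gamma$, I take a typical element $\xi=\sum_{i,j\in I}c_j\ol{c_i}\,\Phi^{-1}(F^{(i,j)})$ of $\Xi$, where $F\colon K\to\bM_m(\bC)$ is a convex continuous nc function, and apply $q=\rho\circ\Phi$ to obtain, for $\phi\in L$,
\[
[q(\xi)](\phi)=\sum_{i,j\in I}\ol{c_i}\,c_j\,[F_1(\phi)]_{i,j}=\langle F_1(\phi)c,c\rangle,\qquad c=(c_i)_{i\in I}.
\]
Since $F$ is self-adjoint, $F_1(\phi)$ is a self-adjoint matrix and this quantity is real; since $F$ is convex, $F_1$ is matrix convex on $L$, so compressing by the fixed vector $c$ produces a scalar convex function; and continuity of $F$ yields continuity. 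Hence $q(\xi)\in\Gamma$.

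For the affine functions, I first note that whenever $s\in S$ is self-adjoint, the nc function $\Phi(j(s))$ (whose value at $\phi$ is $\phi(s)$) is continuous and affine, hence convex, so the choices $m=1$, $I=\{0\}$, $c_0=1$ in the definition of $\Xi$ show $j(s)\in\Xi$, and then $q(j(s))=\eps(s)$ by construction of $q$. It remains to identify the restrictions to $L$ of affine weak-$*$ continuous functions with such $\eps(s)$: writing an affine weak-$*$ continuous $a$ on $S^*$ as $a(\mu)=\re\mu(x)+r$ with $x\in S$ and $r\in\bR$, and using $\phi(1)=1$ for states, I obtain $a|_L=\eps(s)$ with $s=\tfrac12(x+x^*)+r\cdot 1$ self-adjoint. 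Thus all such restrictions lie in $q(\Xi)$.

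For minimality, that $\Gamma$ is itself a closed cone containing $q(\Xi)$ and stable under maxima is immediate once one recalls that a pointwise maximum of convex functions is convex. For the reverse, let $\Gamma'$ be any closed cone stable under maxima with $q(\Xi)\subset\Gamma'$; by the previous paragraph $\Gamma'$ contains every restriction to $L$ of an affine weak-$*$ continuous function, hence also every finite maximum of such. The crux is then the classical approximation lemma that each $f\in\Gamma$ is a uniform limit of such finite maxima. I would prove it by supporting hyperplanes: for $x\in L$ and $r<f(x)$, strictly separating the point $(x,r)$ from the closed convex epigraph of $f$ inside $S^*\times\bR$ (with the weak-$*$ topology on $S^*$) yields an affine minorant $a\le f$ with $a(x)>r$ whose linear part, being weak-$*$ continuous, comes from $S$; so $a$ is a restriction of the kind handled above. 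Given $\delta>0$, choosing for each $x$ a minorant $a_x\le f$ with $a_x(x)>f(x)-\delta$, the open sets $\{a_x>f-\delta\}$ cover the compact set $L$; a finite subcover produces $g=\max_i a_{x_i}$ with $f-\delta<g\le f$, so $\|f-g\|<\delta$. As $g\in\Gamma'$ and $\Gamma'$ is closed, $f\in\Gamma'$, giving $\Gamma\subset\Gamma'$.

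The main obstacle is exactly this approximation step. The delicate point is that the separation must be carried out in $S^*\times\bR$ equipped with the weak-$*$ topology, so that the resulting minorants are genuinely \emph{weak-$*$ continuous} affine functions on $S^*$; this is precisely what lets the affine part of the argument feed back in and place the finite maxima inside $\Gamma'$, and it is why the second assertion is phrased in terms of weak-$*$ continuous affine functions. Were one to work with abstract continuous affine functions on $L$ instead, one would additionally need Kadison's function representation to recognize them as evaluations $\eps(s)$, which the present route avoids.
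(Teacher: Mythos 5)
Your proof is correct and follows essentially the same route as the paper's: the inclusion $q(\Xi)\subset\Gamma$ is obtained by compressing $F_1$ with the coefficient vector exactly as in the paper, and the affine functions enter via $j(s)\in\Xi$ and $q(j(s))=\eps(s)$. The only divergence is that where the paper simply cites \cite[Corollary I.1.3]{alfsen1971} for the minimality of $\Gamma$, you prove the underlying approximation statement (every continuous convex function on $L$ is a uniform limit of finite maxima of restrictions of weak-$*$ continuous affine functions) directly via Hahn--Banach separation of the epigraph in $S^*\times\bR$ and compactness of $L$; this is a correct, self-contained substitute for the citation.
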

\begin{proof}
Recall that for each cardinal $n\leq \kappa$, $K_n$ is the set of all completely positive maps from $S$ into $B(H_n)$. Hence, $K_1=L$. 

Fix $s\in S$. The evaluation function $\widehat s:K\to \bC$ defines a continuous nc function, which is readily seen to be convex. By \eqref{Eq:ncfunct} we see that $\Phi^{-1}(\widehat s)=j(s)$, so that $j(s)\in \Xi$. Hence $q(\Xi)$ contains $q(j(s))=\eps(s)$ for every $s\in S$, and hence contains all restrictions to $L$ of affine weak-$*$ continuous functions on $S^*$.

Next, let $\xi\in \Xi$. By definition, this means that there is a cardinal $m\leq \kappa$, a convex continuous nc function $F=(F^{(i,j)})_{i,j<m}: K\to \bM_m(\bC)$, a finite subset  $I\subset \{n<m\}$, and a subset $\{c_i:i\in I\}$ complex numbers such that
\[
\xi=\sum_{i,j\in I}c_j \ol{c_i}\Phi^{-1}(F^{(i,j)}).
\]
Let $h=(h_n)$ be the vector in $\bigoplus_{n<m} \bC$ such that $h_n=c_n$ if $n\in I$, and $h_n=0$ otherwise. Then, using that $F$ is convex and applying \eqref{Eq:restriction}, we obtain for each $\phi,\psi\in K_1$ and $0\leq t\leq 1$, that
\begin{align*}
[q(\xi)] (t\phi+(1-t)\psi)&=\sum_{i,j\in I}c_j \ol{c_i}F_1^{(i,j)}(t\phi+(1-t)\psi)\\
&=\langle F_1 (t\phi+(1-t)\psi)h,h\rangle_{H_m}\\
&\leq \langle (tF_1 (\phi)+(1-t)F_1(\psi))h,h\rangle_{H_m}\\
&=t\left(\sum_{i,j\in I}c_j \ol{c_i}F_1^{(i,j)}(\phi)\right)+(1-t)\left(\sum_{i,j\in I}c_j \ol{c_i}F_1^{(i,j)}(\psi)\right)\\
&=t[q(\xi)](\phi)+(1-t)[q(\xi)](\phi).
\end{align*}
We infer that $q(\xi)$ is convex on $L$.

We have thus proved that $q(\Xi)$ contains all restrictions to $L$ of affine weak-$*$ continuous functions on $S^*$, and it is contained in $\Gamma$. The second conclusion follows directly from this, in light of \cite[Corollary I.1.3]{alfsen1971}.
\end{proof}

\subsection{The pre-order induced by $\Xi$}

The motivation for introducing $\Xi$ is the next development. We define 
\[
\Order(\Xi)\subset \E(\rC^*_{\max}(S))\times \E(\rC^*_{\max}(S))
\]
to be the pre-order consisting of those pairs of states $(\phi,\psi)$ satisfying 
$
\phi(\xi)\leq \psi(\xi)$ for every $\xi\in \Xi.$

\begin{proposition}\label{P:dilordercone}
Let $S$ be an operator system with maximal $\rC^*$-cover  $(\rC^*_{\max}(S),j)$. Then, $\D(j(S),\rC^*_{\max}(S))=\Order(\Xi)$. In particular, $\D(j(S),\rC^*_{\max}(S))$ is a convex, weak-$*$ closed partial order on the state space of $\rC^*_{\max}(S)$.
\end{proposition}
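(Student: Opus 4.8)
Write $B=\rC^*_{\max}(S)$ and $A=j(S)$. The plan is to establish the two inclusions separately, relying throughout on one translation between representations and points of $K$: if $\rho\colon B\to B(H)$ is a unital $*$-representation on a space of dimension at most $\kappa$, then $\rho\circ j\colon S\to B(H)$ is a point of $K_{\dim H}$, and since $A$ generates $B$ and a $*$-homomorphism is determined by its restriction to a generating set, $\rho$ must coincide with the extension $\widehat{\rho\circ j}$ of Lemma~\ref{L:Cstarmax}. Hence \eqref{Eq:ncfunct} gives
\[
\rho\bigl(\Phi^{-1}(G)\bigr)=G(\rho\circ j)
\]
for every continuous nc function $G\colon K\to\bC$ (evaluated at level $\dim H$). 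Before anything else I would record the cheap structural facts about the right-hand side: $\Order(\Xi)$ is the pre-order induced by the cone $\Xi\subset B$, so it is automatically convex, and it is weak-$*$ closed because for each fixed $\zeta\in\Xi$ the condition $\phi(\zeta)\le\psi(\zeta)$ cuts out a weak-$*$ closed convex set and $\Order(\Xi)$ is their intersection. I would also note, from the proof of Proposition~\ref{P:maxstable}, that each evaluation $\widehat s$ is an affine (hence convex) continuous nc function with $\Phi^{-1}(\widehat s)=j(s)$, so that $\pm j(s)\in\Xi$; thus any pair in $\Order(\Xi)$ necessarily agrees on $A$, exactly as a dilation-order pair must.

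For $\D(A,B)\subset\Order(\Xi)$, I would start from a witnessing pair of representations $(\pi,H,\xi)$, $(\sigma,K,\eta)$ and an isometry $V$ with $V\xi=\eta$ and $\pi|_A=V^*\sigma(\cdot)V|_A$, arranged (per the cardinality conventions of \cite{DK2019}) so that $\dim H,\dim K\le\kappa$. Setting $x=\pi\circ j$ and $y=\sigma\circ j$, the compression relation says precisely that $x=V^*yV$ as points of $K$. For $\zeta=\sum_{i,j\in I}c_j\overline{c_i}\,\Phi^{-1}(F^{(i,j)})\in\Xi$ with $F$ convex, the displayed identity and the matrix form of $F$ give $\phi(\zeta)=\langle F(x)(h\otimes\xi),h\otimes\xi\rangle$ and $\psi(\zeta)=\langle F(y)(h\otimes\eta),h\otimes\eta\rangle$, where $h$ has entries $c_i$. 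The one genuine ingredient here is the nc Jensen inequality $F(V^*yV)\le(1\otimes V)^*F(y)(1\otimes V)$ for continuous convex nc functions, which I would deduce from the level-wise convexity of $F$ together with its compatibility with direct sums and unitary conjugations (dilate the isometry $V$ to a unitary and apply ordinary convexity). Pairing this against $h\otimes\xi$ and using $V\xi=\eta$ then yields $\phi(\zeta)\le\psi(\zeta)$.

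The reverse inclusion $\Order(\Xi)\subset\D(A,B)$ is where I expect the real difficulty. Here one has only the scalar data $\phi(\zeta)\le\psi(\zeta)$ for all $\zeta\in\Xi$ and must manufacture an actual dilation. My plan is to view this as the scalarized, state-level form of the noncommutative Cartier--Fell--Meyer phenomenon and to feed it into the dilation machinery of \cite{DK2019} --- the same circle of ideas (\cite[Proposition~7.2.8 and Theorem~8.5.1]{DK2019}) invoked before the statement to see that $\D$ is a partial order. Concretely, I would argue by separation: the functional $\psi-\phi$ is positive on $\Xi$ and annihilates $A$, and since the elements of $\Xi$ are precisely the quadratic combinations of continuous convex nc functions, such a functional ought to be ``represented'' by a compression between representations of $\phi$ and $\psi$, i.e.\ a dilation. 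Dually, were no dilation to exist, a Hahn--Banach argument in $B\cong\A$ should produce a convex continuous nc function separating $\phi$ from the set of states that dilate it, contradicting the hypothesis. The hard part will be making this precise, as it requires the Davidson--Kennedy description of states on $\A$ through their barycenters and the existence of maximal dilations; I would lean on their results rather than reprove them.

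Combining the two inclusions gives $\D(A,B)=\Order(\Xi)$. The common order is convex and weak-$*$ closed by the first paragraph, and it is antisymmetric --- hence a partial order --- because $\D(A,B)$ already is, as recorded before the statement via \cite[Proposition~7.2.8 and Theorem~8.5.1]{DK2019}. This establishes the ``in particular'' clause.
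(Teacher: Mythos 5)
Your first inclusion $\D(j(S),\rC^*_{\max}(S))\subset\Order(\Xi)$ is essentially sound (modulo justifying the nc Jensen inequality $F(V^*yV)\le (1\otimes V)^*F(y)(1\otimes V)$ for isometries, and the reduction to representations on spaces of dimension at most $\kappa$), but the reverse inclusion is a genuine gap: what you offer there is a plan, not a proof. The separation argument you sketch --- if no dilation of $\phi$ onto $\psi$ exists, produce a convex continuous nc function witnessing this --- is precisely the content of the noncommutative Cartier-type theorem, \cite[Theorem 8.5.1]{DK2019}, and making it precise requires the full Choquet-theoretic machinery of that paper (convexity and closedness of the set of dilations, a bipolar-type theorem for convex nc functions, existence of maximal dilations). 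You cite Theorem 8.5.1 only as the source of antisymmetry and never actually deploy it to close the argument, so as written the hard direction remains unproved.

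The paper's proof is much shorter and uses \cite[Theorem 8.5.1]{DK2019} as a black box for \emph{both} directions at once: that theorem states that $(\phi,\psi)\in\D(j(S),\rC^*_{\max}(S))$ if and only if $[\phi(\Phi^{-1}(F^{(i,j)}))]_{i,j<m}\le[\psi(\Phi^{-1}(F^{(i,j)}))]_{i,j<m}$ in $\bM_m(\bC)$ for every convex continuous nc function $F$. The proposition then reduces to the elementary observation that such a matrix inequality holds if and only if it holds when paired against every finitely supported vector $h$, and pairing against $h$ produces exactly the elements $\sum_{i,j\in I}c_j\ol{c_i}\Phi^{-1}(F^{(i,j)})$ of $\Xi$; in other words, $\Xi$ is designed precisely to scalarize Theorem 8.5.1. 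Once you grant yourself that theorem --- which you must in order to complete your reverse inclusion --- your direct dilation argument for the forward inclusion becomes unnecessary as well.
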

\begin{proof}
By \cite[Theorem 8.5.1]{DK2019}, we see that $(\phi,\psi)\in \D(j(S),\rC^*_{\max}(S))$ is equivalent to 
\[
[\phi(\Phi^{-1}(F^{(i,j)}))]_{i,j<m}\leq [\psi(\Phi^{-1}(F^{(i,j)}))]_{i,j<m} \quad \text{ in } \bM_m(\bC)\]
for every convex nc continuous function $F=[F^{(i,i)}]_{i,j<m}:K\to \bM_m(\bC)$ and every cardinal number $m\leq \kappa$. For such a function $F$, the required inequality is equivalent to
\[
\langle [ \phi(\Phi^{-1}(F^{(i,j)}))]h,h\rangle\leq \langle [\psi(\Phi^{-1}(F^{(i,j)}))]h,h\rangle
\]
for every finitely supported vector $h$ in $\bigoplus_{n<m}\bC$. 
Given such  finitely supported vector $h$, there is a finite subset $I\subset \{n<m\}$ and complex numbers $\{c_i:i\in I\}$ such that
\begin{align*}
\langle [\phi(\Phi^{-1}(F^{(i,j)}))]h,h\rangle=\phi\left(\sum_{i,j\in I}c_j \ol{c_i}\Phi^{-1}(F^{(i,j)})\right)
\end{align*}
and 
\[
\langle [ \psi(\Phi^{-1}(F^{(i,j)}))]h,h\rangle=\psi\left(\sum_{i,j\in I}c_j \ol{c_i}\Phi^{-1}(F^{(i,j)})\right).
\]
Therefore, $(\phi,\psi)\in \D(j(S),\rC^*_{\max}(S))$  is equivalent to 
\[
\phi(\xi)\leq \psi(\xi), \quad \xi\in \Xi.
\]
In other words, $\D(j(S),\rC^*_{\max}(S))=\Order(\Xi)$.
It is routine to check that this implies that $\D(j(S),\rC^*_{\max}(S))$ is convex and weak-$*$ closed.
\end{proof}

The previous result applies only to operator systems that are represented in their maximal $\rC^*$-covers. Our next aim is to show that Proposition \ref{P:dilordercone} still contains relevant information about dilation maximal states for any representation of $S$. For this purpose, we need the following.

\begin{lemma}\label{L:maxinv}
Let $B$ be a unital $\rC^*$-algebra generated by an operator system $S\subset B$. Let $q:\rC^*_{\max}(S)\to B$ be the surjective unital $*$-homomorphism such that $q\circ j=\id$ on $S$. Let $\phi$ be a state on $B$. Then, $\phi$ is $\D(S,B)$-maximal if and only if $\phi\circ q$ is $\D(j(S),\rC^*_{\max}(S))$-maximal.
\end{lemma}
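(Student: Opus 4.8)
The plan is to deduce the statement from the characterization of dilation maximality via the unique extension property in Theorem \ref{T:Dmaxuep}, applied both to $B$ and to $\rC^*_{\max}(S)$. Write $(\rho_\phi, H_\phi, \xi_\phi)$ for the GNS representation of $\phi$. The first observation is that $(\rho_\phi \circ q, H_\phi, \xi_\phi)$ is unitarily equivalent to the GNS representation of $\phi \circ q$: indeed $\langle \rho_\phi(q(x))\xi_\phi, \xi_\phi\rangle = (\phi\circ q)(x)$, and since $q$ is surjective the vector $\xi_\phi$ remains cyclic for $\rho_\phi\circ q$. By Theorem \ref{T:Dmaxuep}, $\phi$ is $\D(S,B)$-maximal precisely when $\rho_\phi$ has the unique extension property with respect to $S$, while $\phi\circ q$ is $\D(j(S), \rC^*_{\max}(S))$-maximal precisely when $\rho_\phi\circ q$ has the unique extension property with respect to $j(S)$. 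As $(\rho_\phi\circ q)|_{j(S)} = \rho_\phi|_S$ under the identification $j\colon S \to j(S)$, the lemma reduces to a single assertion: $\rho_\phi$ has the unique extension property with respect to $S$ in $B$ if and only if $\rho_\phi\circ q$ has the unique extension property with respect to $j(S)$ in $\rC^*_{\max}(S)$.

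One implication is straightforward and uses only the surjectivity of $q$. Assuming $\rho_\phi\circ q$ has the unique extension property with respect to $j(S)$, let $\omega\colon B \to B(H_\phi)$ be unital completely positive with $\omega|_S = \rho_\phi|_S$. Then $\omega\circ q$ is unital completely positive on $\rC^*_{\max}(S)$ and agrees with $\rho_\phi\circ q$ on $j(S)$, so $\omega\circ q = \rho_\phi\circ q$, and surjectivity of $q$ forces $\omega = \rho_\phi$. (This is the same content as the elementary fact that the injection $\phi\mapsto\phi\circ q$ is order preserving for the two dilation orders, which one checks from the definition by composing the defining representations and isometry with $q$; it yields directly that maximality of $\phi\circ q$ implies maximality of $\phi$.)

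The substance of the proof is the reverse implication: assuming $\rho_\phi$ has the unique extension property with respect to $S$ in $B$, one must show the same for $\rho_\phi\circ q$ in the larger algebra $\rC^*_{\max}(S)$. This is the main obstacle, and it is genuine: $\rC^*_{\max}(S)$ carries more $*$-representations than $B$, so a unital completely positive $\Omega\colon \rC^*_{\max}(S)\to B(H_\phi)$ agreeing with $\rho_\phi\circ q$ on $j(S)$ need not factor through $q$, and the unique extension property in $B$ cannot be invoked directly on $\Omega$. The way I would circumvent this is to descend to the operator-system level, where the relevant notion is intrinsic to $S$ and independent of the chosen $\rC^*$-cover: for a $*$-representation $\pi$ of any cover, the unique extension property with respect to the copy of $S$ is equivalent to maximality of the induced unital completely positive map $\pi|_S\colon S\to B(H_\phi)$, meaning that $\pi|_S$ admits no nontrivial dilation. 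This equivalence underlies Theorem \ref{T:Dmaxuep} and is contained in \cite[Proposition 5.2.3 and Theorem 8.3.7]{DK2019}; concretely, a nontrivial dilation surviving in $\rC^*_{\max}(S)$ could, after a Stinespring dilation of $\Omega$ and an Arveson extension of the compression over $B$, be transported back to contradict the unique extension property of $\rho_\phi$. Since the induced map $\rho_\phi|_S$ is literally the same object in both settings, its maximality is cover-independent and transfers verbatim, yielding the unique extension property of $\rho_\phi\circ q$ with respect to $j(S)$ and completing the argument.
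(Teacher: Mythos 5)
Your proposal is correct and follows essentially the same route as the paper: identify the GNS representation of $\phi\circ q$ with $\rho_\phi\circ q$, transfer the unique extension property across the completely isometric map $q|_S$, and conclude via Theorem \ref{T:Dmaxuep}. The only difference is that where the paper cites \cite[Theorem 2.1.2]{arveson1969} for the transfer step, you supply its standard modern proof (cover-independence of maximality of the ucp map $\rho_\phi|_S$, via Stinespring dilation and Arveson extension), which is exactly the content of that citation.
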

\begin{proof}
Let $\pi:B\to B(H)$ be the GNS representation of $\phi$. It is then easily verified that $\pi\circ q$ is the GNS representation of $\phi\circ q$. Since $q$ is completely isometric on $S$, we infer that $\pi$ has the unique extension property with respect to $S$ if and only if $\pi\circ q$ has the unique extension property with respect to $j(S)$; see for instance \cite[Theorem 2.1.2]{arveson1969}, the proof of which easily adapts outside the irreducible setting. The desired conclusion then follows from Theorem \ref{T:Dmaxuep}.
\end{proof}

Retaining the notation from above, we can now state the main result of this section.

\begin{theorem}\label{T:dilmaxinv}
Let $B$ be a unital $\rC^*$-algebra generated by an operator system $S\subset B$. Let $q:\rC^*_{\max}(S)\to B$ be the surjective unital $*$-homomorphism such that $q\circ j=\id$ on $S$. Then, $\max(\D(S,B))=\max(\Order(q(\Xi)))$.
\end{theorem}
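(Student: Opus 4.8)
The plan is to compare the two pre-orders on $\E(B)$ directly, bearing in mind that they need not coincide as pre-orders (Proposition \ref{P:dilordercone} gives an equality only at the level of the maximal $\rC^*$-cover). The workhorse is the state-restriction map $\phi\mapsto\phi\circ q$, which sends $\E(B)$ affinely and injectively into $\E(\rC^*_{\max}(S))$, together with the elementary translation that $(\phi,\psi)\in\Order(q(\Xi))$ holds if and only if $(\phi\circ q,\psi\circ q)\in\Order(\Xi)$: indeed, since every element of $q(\Xi)$ has the form $q(\xi)$ with $\xi\in\Xi$, the inequalities $\phi(q(\xi))\le\psi(q(\xi))$ for all $\xi\in\Xi$ are literally the defining inequalities of $\Order(q(\Xi))$.

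The first step is to establish the pre-order inclusion $\D(S,B)\subseteq\Order(q(\Xi))$. Given $(\phi,\psi)\in\D(S,B)$ witnessed by representations $(\pi,H,\xi)$ and $(\sigma,K,\eta)$ together with an isometry $V$ satisfying $V\xi=\eta$ and $\pi(a)=V^*\sigma(a)V$ for $a\in S$, I would compose everything with $q$: the representations $\pi\circ q$ and $\sigma\circ q$ of $\rC^*_{\max}(S)$ represent $\phi\circ q$ and $\psi\circ q$, the same $V$ is used, and because $q\circ j=\id$ on $S$ one checks $(\pi\circ q)(j(s))=\pi(s)=V^*\sigma(s)V=V^*(\sigma\circ q)(j(s))V$. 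Hence $(\phi\circ q,\psi\circ q)\in\D(j(S),\rC^*_{\max}(S))=\Order(\Xi)$ by Proposition \ref{P:dilordercone}, which by the translation above is exactly $(\phi,\psi)\in\Order(q(\Xi))$. Since a smaller pre-order has at least as many maximal elements, this inclusion immediately yields $\max(\Order(q(\Xi)))\subseteq\max(\D(S,B))$.

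For the reverse inclusion I would invoke Lemma \ref{L:maxinv} rather than a pre-order containment, which would fail here because $\Order(q(\Xi))$ is in general strictly larger than $\D(S,B)$. Let $\phi\in\max(\D(S,B))$; then $\phi\circ q$ is $\D(j(S),\rC^*_{\max}(S))$-maximal by Lemma \ref{L:maxinv}, hence $\Order(\Xi)$-maximal in the full state space of $\rC^*_{\max}(S)$ by Proposition \ref{P:dilordercone}. If $(\phi,\psi)\in\Order(q(\Xi))$ for some state $\psi$ on $B$, the translation gives $(\phi\circ q,\psi\circ q)\in\Order(\Xi)$, so maximality of $\phi\circ q$ forces $\psi\circ q=\phi\circ q$, and surjectivity of $q$ forces $\psi=\phi$; thus $\phi\in\max(\Order(q(\Xi)))$. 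The point requiring genuine care is conceptual rather than computational: the two orders are \emph{not} equal, so the two inclusions of maximal sets must be obtained by different mechanisms — the easy ``larger order, fewer maxima'' principle in one direction, and the maximality-transfer of Lemma \ref{L:maxinv} (itself powered by the unique extension property) in the other. This is precisely what lets me avoid the naive and unprovable attempt to show that a dominating state on $\rC^*_{\max}(S)$ must factor through $q$. The dilation-lifting computation of the first step is the only place where dilation theory is actually used, and I expect it to be routine.
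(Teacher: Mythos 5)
Your proposal is correct and follows essentially the same route as the paper: one inclusion comes from lifting a dilation witness along $q$ to get the pre-order containment $\D(S,B)\subseteq\Order(q(\Xi))$ via Proposition \ref{P:dilordercone}, and the other from transferring maximality through Lemma \ref{L:maxinv} and the identification $\Order(\Xi)=\D(j(S),\rC^*_{\max}(S))$. The paper merely phrases the first inclusion pointwise rather than as the general ``larger pre-order has fewer maximal elements'' principle, which is the same argument.
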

\begin{proof}
Fix $\phi$ a state on $B$. Assume that $\phi$ is $\D(S,B)$-maximal. Let $\psi$ be another state on $B$ such that $(\phi,\psi)\in \Order(q(\Xi))$. Then, $(\phi\circ q,\psi\circ q)\in \Order(\Xi)$ and therefore $(\phi\circ q,\psi\circ q)\in \D(j(S),\rC^*_{\max}(S))$ by Proposition \ref{P:dilordercone}. On the other hand, $\phi\circ q$ is $\D(j(S),\rC^*_{\max}(S))$-maximal by Lemma \ref{L:maxinv}, whence $\phi\circ q=\psi\circ q$ and $\phi=\psi$. We conclude that $\phi$ is $\Order(q(\Xi))$-maximal. 

Conversely, assume that $\phi$ is $\Order(q(\Xi))$-maximal.  Let $\psi$ be another state on $B$ such that $(\phi,\psi)\in \D(S,B)$. Thus, there are representations $(\pi,H,\xi)$ and $(\sigma,K,\eta)$ of $\phi$ and $\psi$ respectively, along with an isometry $V:H\to K$ such that $V\xi=\eta$, satisfying
\[
\pi(a)=V^*\sigma(a)V, \quad a\in S.
\]
Since $(\pi\circ q,H,\xi)$ and $(\sigma\circ q, K,\eta)$ are representations of $\phi\circ q$ and $\psi\circ q$ respectively, it easily follows that $(\phi\circ q,\psi\circ q)\in \D(j(S),\rC^*_{\max}(S))$. Hence, another application of Proposition \ref{P:dilordercone} gives $(\phi\circ q,\psi\circ q)\in \Order(\Xi)$, so that $(\phi,\psi)\in \Order(q(\Xi))$. The maximality of $\phi$ in $\Order(q(\Xi))$ then forces $\phi=\psi$, so that $\phi$ is $\D(S,B)$-maximal.
\end{proof}

Let us explore some of the ramifications of the previous result in relation to Arveson's conjecture in the commutative setting. 

Let $B$ be a separable commutative unital $\rC^*$-algebra and let $S\subset B$ be an operator system with $\rC^*(S)=B$. Let $L$ denote the state space of $S$. Because $B$ is commutative, the evaluation map $\eps:S\to \rC(L)$ is  completely isometric. 
Let $q:\rC^*_{\max}(S)\to \rC(L)$ denote the unique surjective unital $*$-homomorphism such that $q\circ j=\eps$ on $S$. 

Under the assumption that all pure states on $\rC(L)$ are $\D(\eps(S),\rC(L))$-maximal, to establish the conjecture, we need to prove that $\D(\eps(S),\rC(L))$ is hyperrigid (see Corollary \ref{C:Arvreform}). On the other hand,  the property of being hyperrigid only depends on the set of maximal elements,  so we may replace the dilation order by any pre-order with the same maximal elements. In light of Theorem \ref{T:dilmaxinv}, this means that can just as well try to show that $\Order(q(\Xi))$ is hyperrigid.

In turn, by Corollary \ref{C:bdryHR}, this is equivalent to the existence of a boundary for $\Order(q(\Xi))$. In this context, we may thus hope to apply the classical machinery of \cite[Corollary I.5.18]{alfsen1971} to construct such a boundary.  This strategy essentially reduces to the one employed in \cite{DK2021}. Indeed, in order for  \cite[Corollary I.5.18]{alfsen1971} to be applicable, the cone $q(\Xi)$ would need to be stable under taking maxima. If this were the case, then by virtue of Proposition \ref{P:maxstable}, we would know that the closure of $q(\Xi)$ coincides with the cone of all continuous convex functions on $L$. In turn, Theorem \ref{T:dilmaxinv} would then imply that the dilation maximal elements coincide with so-called Choquet maximal elements, which are at the heart of \cite{DK2021}.
\subsection{Uniqueness of $\Xi$}

It is natural now to wonder whether $\Xi$ is the unique cone in $\rC^*_{\max}(S)$ that satisfies Proposition \ref{P:dilordercone}. Before we can address this question, we introduce some notation and terminology.

Let $B$ be a unital $\rC^*$-algebra. Given a pre-order $\Delta\subset \E(B)\times \E(B)$, we define the \emph{induced cone} of $\Delta$ to be the set $\Cone(\Delta)\subset B$ of self-adjoint elements $b$ with the property that $\phi(b)\leq \psi(b)$ whenever $(\phi,\psi)\in \Delta$.  If, conversely, we are given a cone $\Gamma\subset B$ of self-adjoint elements, we define the \emph{induced order} of $\Gamma$ to be the pre-order $\Order(\Gamma)\subset \E(B)\times \E(B)$ consisting of those pairs of states $(\phi,\psi)$ satisfying 
\[
\phi(\gamma)\leq \psi(\gamma), \quad \gamma\in \Gamma.
\]
There exists a certain duality between these objects, as we show next.

\begin{theorem}\label{T:duality}
Let $B$ be a unital $\rC^*$-algebra and let $\Gamma\subset B$ be a cone of  self-adjoint elements containing both $1$ and $-1$. Then, $\Cone(\Order(\Gamma))$ is the norm closure of  $\Gamma$. 
\end{theorem}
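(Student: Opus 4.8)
The plan is to prove the two inclusions separately, the nontrivial one being a Hahn--Banach separation argument. First, the inclusion $\overline{\Gamma}\subset\Cone(\Order(\Gamma))$ is essentially formal: if $b\in\Gamma$ and $(\phi,\psi)\in\Order(\Gamma)$, then $\phi(b)\leq\psi(b)$ by the very definition of $\Order(\Gamma)$, so $\Gamma\subset\Cone(\Order(\Gamma))$. Since $\Cone(\Order(\Gamma))$ is an intersection of the norm-closed half-spaces $\{b=b^*:(\psi-\phi)(b)\geq 0\}$ taken over all $(\phi,\psi)\in\Order(\Gamma)$, it is itself norm closed, and therefore contains $\overline{\Gamma}$.

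For the reverse inclusion I would argue by contraposition: assuming $b=b^*$ with $b\notin\overline{\Gamma}$, I will produce a pair $(\phi,\psi)\in\Order(\Gamma)$ with $\phi(b)>\psi(b)$, which witnesses $b\notin\Cone(\Order(\Gamma))$. Working in the real Banach space $B_{\mathrm{sa}}$ of self-adjoint elements, the set $\overline{\Gamma}$ is a closed convex cone, so the strict separation form of Hahn--Banach yields a bounded real-linear functional $f$ on $B_{\mathrm{sa}}$ and a scalar $\alpha$ with $f(\gamma)\leq\alpha<f(b)$ for all $\gamma\in\overline{\Gamma}$. Because $\overline{\Gamma}$ is a cone, the bound $f(t\gamma)=tf(\gamma)\leq\alpha$ for every $t>0$ forces $f(\gamma)\leq 0$ on $\overline{\Gamma}$, and hence $f(b)>0$. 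Complexifying $f$ to a bounded self-adjoint functional on $B$ and setting $g=-f$, I obtain a self-adjoint functional with $g(\gamma)\geq 0$ for every $\gamma\in\Gamma$ and $g(b)<0$.

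The crux is then to realize $g$ as a positive multiple of $\psi-\phi$ for genuine states $\phi,\psi$, and this is exactly where the hypothesis $1,-1\in\Gamma$ enters. Indeed, applying $g(\gamma)\geq 0$ to both $\gamma=1$ and $\gamma=-1$ gives $g(1)=0$. Taking the Jordan decomposition $g=g_+-g_-$ into positive functionals, the relation $g(1)=0$ yields $g_+(1)=g_-(1)$; calling this common value $c$, we have $c=\|g_+\|=\|g_-\|$, and $c>0$ since otherwise $g=0$ would contradict $g(b)<0$. The normalized functionals $\phi=c^{-1}g_-$ and $\psi=c^{-1}g_+$ are then states; $\psi-\phi=c^{-1}g$ satisfies $(\psi-\phi)(\gamma)\geq 0$ for all $\gamma\in\Gamma$, so that $(\phi,\psi)\in\Order(\Gamma)$, while $(\psi-\phi)(b)=c^{-1}g(b)<0$ gives $\phi(b)>\psi(b)$, as desired. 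I expect the main obstacle to be precisely this last passage: a bare separating functional need not have the correct normalization or sign to arise from a pair of states, and it is the presence of both $1$ and $-1$ in $\Gamma$---pinning down $g(1)=0$---together with the Jordan decomposition that repairs this. The routine verifications (closedness of $\Cone(\Order(\Gamma))$, the cone structure of $\overline{\Gamma}$, and the fact that a positive functional attains its norm at $1$) I would dispatch quickly.
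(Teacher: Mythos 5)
Your proof is correct and follows essentially the same route as the paper's: Hahn--Banach separation of $b$ from the norm-closed cone, using $1,-1\in\Gamma$ to force the separating functional to vanish at $1$, and the Jordan decomposition to renormalize it into a difference of two states giving a pair in $\Order(\Gamma)$ that witnesses $b\notin\Cone(\Order(\Gamma))$. The only cosmetic difference is that you separate in the real space $B_{\mathrm{sa}}$ and then complexify, whereas the paper separates in $B$ and passes to $\operatorname{Re}\theta=(\theta+\theta^*)/2$.
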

\begin{proof}
By continuity, it follows from the definitions that the norm closure of $\Gamma$ is contained in $ \Cone(\Order(\Gamma))$. Assume that there is a self-adjoint element $b\in \Cone(\Order(\Gamma))$ outside the norm closure of $\Gamma$. By the convex separation theorem, we can find a bounded linear functional $\theta$ on $B$ such that
\[
 \sup_{c\in \Gamma}(\re \theta)(c)<(\re \theta)(b).
\]
Here, we let $\re \theta=(\theta+\theta^*)/2$; this is a self-adjoint bounded linear functional on $B$.

Next, let $c\in \Gamma$. Then $tc\in \Gamma$ for every $t>0$, so that
\[
t(\re \theta)(c)<(\re \theta)(b),
\]
which forces $(\re \theta)(c)\leq 0.$
Since both $1$ and $-1$ belong to $\Gamma$, we find $(\re \theta)(1)=0$. Therefore,
\begin{equation}\label{Eq:reneg}
\sup_{c\in \Gamma}(\re \theta)(c)=0.
\end{equation}
Next, by the Jordan decomposition \cite[Lemma 3.2.2]{pedersen1979book}, there are positive linear functionals $\phi_0,\psi_0$ on $B$ with the property that $\re\theta=\phi_0-\psi_0$. Using that $(\re \theta)(1)=0$, we infer that there is strictly positive number $r$ such that  $r=\|\phi_0\|=\phi_0(1)=\psi_0(1)=\|\psi_0\|$. Define $\phi=\frac{1}{r}\phi_0$ and $\psi=\frac{1}{r}\psi_0$, which are then states on $B$ satisfying
\[
 \sup_{c\in \Gamma} (\phi(c)-\psi(c)) =0<\phi(b)-\psi(b)
\]
by virtue of \eqref{Eq:reneg}. In particular, $(\phi,\psi)\in \Order(\Gamma)$. In turn, because $b$ lies in $ \Cone(\Order(\Gamma))$, this means that $\phi(b)\leq \psi(b)$, contradicting the previous inequality.
\end{proof}

One may wonder if the ``duality" uncovered above between cones and pre-orders goes in the other direction, namely whether $\Order(\Cone(\Delta))=\Delta$ for any pre-order $\Delta$ on the state space of $B$. It is readily seen that $\Delta$ must be convex and weak-$*$ closed for this to hold, but at the time of this writing we do not know if these necessary conditions are also sufficient.

We can now address the uniqueness question raised earlier.

\begin{corollary}\label{C:dilconeunique}
Let $S$ be an operator system. Then, the norm closure of $\Xi$ is the unique closed cone $\Gamma$ of self-adjoint elements in $\rC^*_{\max}(S)$ containing $1$ and $-1$, and satisfying \[\D(j(S),\rC^*_{\max}(S))=\Order(\Gamma).\]
\end{corollary}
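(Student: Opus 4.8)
The plan is to deduce this from the duality established in Theorem \ref{T:duality}, combined with the identification $\D(j(S),\rC^*_{\max}(S))=\Order(\Xi)$ furnished by Proposition \ref{P:dilordercone}. The first task is to check that $\Xi$ (and hence its norm closure) meets the hypotheses of Theorem \ref{T:duality}: it must be a cone of self-adjoint elements containing $1$ and $-1$. That it is a cone is exactly Lemma \ref{L:convcone}. Self-adjointness follows from the fact that a convex nc function $F$ is by definition self-adjoint, so that $F^{(j,i)}=(F^{(i,j)})^*$; applying the $*$-isomorphism $\Phi^{-1}$, each generator $\sum_{i,j\in I}c_j\ol{c_i}\Phi^{-1}(F^{(i,j)})$ is seen to equal its own adjoint after relabelling the summation indices $i\leftrightarrow j$. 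To see that $1,-1\in\Xi$, I would reuse the computation already recorded in the proof of Proposition \ref{P:maxstable}: for $s\in S$ the evaluation function $\widehat s$ is affine, hence convex, and continuous, with $\Phi^{-1}(\widehat s)=j(s)$, so $j(s)\in\Xi$; taking $s=\pm 1$ gives $\pm 1\in\Xi$.

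Next I would establish existence, namely that $\ol{\Xi}$ itself is an admissible cone. Since states are norm continuous, the defining inequalities $\phi(\gamma)\leq\psi(\gamma)$ persist under norm limits, so $\Order(\ol{\Xi})=\Order(\Xi)$; combined with Proposition \ref{P:dilordercone} this gives $\Order(\ol{\Xi})=\D(j(S),\rC^*_{\max}(S))$. Thus $\ol{\Xi}$ is a closed cone of self-adjoint elements containing $1$ and $-1$ that induces the dilation order, as required.

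For uniqueness, I would invoke the duality twice. Applying Theorem \ref{T:duality} to $\Xi$ yields $\Cone(\Order(\Xi))=\ol{\Xi}$, that is, $\Cone(\D(j(S),\rC^*_{\max}(S)))=\ol{\Xi}$. On the other hand, if $\Gamma$ is any closed cone of self-adjoint elements containing $1$ and $-1$ with $\Order(\Gamma)=\D(j(S),\rC^*_{\max}(S))$, then Theorem \ref{T:duality} identifies $\Cone(\Order(\Gamma))$ with the norm closure of $\Gamma$, which is $\Gamma$ itself since $\Gamma$ is already closed. Hence $\Gamma=\Cone(\D(j(S),\rC^*_{\max}(S)))=\ol{\Xi}$, pinning down $\ol{\Xi}$ as the unique such cone.

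There is no serious obstacle here: the statement is essentially a formal consequence of Theorem \ref{T:duality} once one knows $\Order(\Xi)=\D(j(S),\rC^*_{\max}(S))$. The only point requiring genuine verification, rather than a one-line citation, is that $\Xi$ satisfies the hypotheses of the duality theorem; and within that, the self-adjointness of the generators is the step most easily glossed over, so I would take care to spell out the index-relabelling that exhibits each generator as a fixed point of the adjoint.
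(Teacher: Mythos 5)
Your proposal is correct and follows essentially the same route as the paper: identify $\Order(\ol{\Xi})=\Order(\Xi)=\D(j(S),\rC^*_{\max}(S))$ via Proposition \ref{P:dilordercone}, then apply Theorem \ref{T:duality} both to $\Xi$ and to any competing closed cone $\Gamma$ to conclude $\Gamma=\Cone(\D(j(S),\rC^*_{\max}(S)))=\ol{\Xi}$. The only difference is that you explicitly verify the hypotheses of Theorem \ref{T:duality} for $\Xi$ (self-adjointness of the generators and the containment of $\pm 1$ via $j(\pm 1)\in\Xi$), which the paper leaves implicit; this is a worthwhile addition rather than a deviation.
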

\begin{proof}
First, it is clear that $\Order(\Xi)=\Order(\ol{\Xi})$, where $\ol{\Xi}\subset B$ denotes the norm closure of $\Xi$. Hence, we can apply Proposition \ref{P:dilordercone} to see that 
\[
\D(j(S),\rC^*_{\max}(S))=\Order(\ol{\Xi}).\]
Assume that $\Gamma\subset \rC^*_{\max}(S)$ is a closed cone of self-adjoint elements, containing $1$ and $-1$, and satisfying \[\D(j(S),\rC^*_{\max}(S))=\Order(\Gamma).\] By virtue of Theorem \ref{T:duality}, we see that 
$
\ol{\Xi}=\Cone(\D(j(S),\rC^*_{\max}(S)))=\Gamma.
$
\end{proof}

\section{Detecting hyperrigidity with the boundary projection}\label{S:bdryproj}

In the discussion following Theorem \ref{T:dilmaxinv}, we saw that the existence of a boundary for the dilation order cannot simply be inferred from the known classical techniques of \cite{alfsen1971}, even in the commutative setting. In this section, we exploit  non-commutative machinery to exhibit a certain ``non-classical" boundary. We also illustrate how its regularity properties are deeply intertwined with Arveson's conjecture. The crucial idea is to consider absolute continuity for states, and how it interacts with maximality in the dilation order.

Let $B$ be a unital $\rC^*$-algebra. The bidual $B^{**}$ is then a von Neumann algebra. If $\pi:B\to B(H)$ is a  $*$-representation, then it admits a unique weak-$*$ continuous extension $\tilde\pi: B^{**}\to B(H)$, which is also a $*$-representation. 
Given a state $\phi$ on $B$, we consider its left kernel
\[
L_\phi=\{x\in B^{**}:\phi(x^*x)=0\}.
\]
Let $\psi$ be another state on $B$. We say that $\phi$ is \emph{absolutely continuous} with respect to $\psi$ if $L_\psi\subset L_\phi$. 

As shown in \cite[Lemma 2.6]{CT2023}, this definition coincides with the usual one when $B$ is commutative. Unlike in the classical setting however, the existence of some form of a Radon--Nikodym theorem in the general case is a rather subtle issue, and no perfect analogue exists as far as we know; see  \cite{sakai1965},\cite{PT1973},\cite{exel1990},\cite{vaes2001},\cite{GK2009} and the references therein. Fortunately, this difficulty can be circumvented via the following fact.

\begin{lemma}\label{L:abscont}
Let $B$ be a unital $\rC^*$-algebra. Let $\phi,\psi$ be states on $B$ with respective GNS representations $(\pi,H,\xi)$ and $(\sigma,K,\eta)$. Assume that $\phi$ is absolutely continuous with respect to $\psi$. Then, the following statements hold.
\begin{enumerate}[{\rm (i)}]
\item There is a unique weak-$*$ continuous $*$-representation $\rho:\tilde\sigma(B^{**})\to \tilde \pi(B^{**})$ such that $\rho\circ \tilde \sigma=\tilde \pi$.
\item There is a normal state $\tau$ on $B(K)$ such that $\phi=\tau\circ \tilde\sigma$.
\end{enumerate}
\end{lemma}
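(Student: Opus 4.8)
The plan is to establish (i) first and then obtain (ii) as a consequence, together with a standard extension result for normal states. Throughout, I identify $\phi$ and $\psi$ with their unique normal extensions to $B^{**}$, and I recall that the images $\tilde\pi(B^{**})$ and $\tilde\sigma(B^{**})$ are von Neumann algebras, namely the weak closures of $\pi(B)$ and $\sigma(B)$ respectively. The crux of the argument is a concrete description of $\ker\tilde\pi$ in terms of the left kernel $L_\phi$. Since $\xi$ is cyclic for $\pi(B)$, the vectors $\tilde\pi(b)\xi$ with $b\in B$ span a dense subspace of $H$, so for $x\in B^{**}$ we have $\tilde\pi(x)=0$ if and only if $\tilde\pi(xb)\xi=\tilde\pi(x)\tilde\pi(b)\xi=0$ for every $b\in B$. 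Because $\|\tilde\pi(xb)\xi\|^2=\phi\big((xb)^*(xb)\big)$, this holds precisely when $xb\in L_\phi$ for all $b\in B$; that is, $\ker\tilde\pi=\{x\in B^{**}:xB\subseteq L_\phi\}$ is the largest two-sided ideal contained in the left ideal $L_\phi$. The same computation for $\sigma,\psi,\eta$ yields $\ker\tilde\sigma=\{x\in B^{**}:xB\subseteq L_\psi\}$. Absolute continuity now enters decisively: from $L_\psi\subseteq L_\phi$ it follows at once that $\ker\tilde\sigma\subseteq\ker\tilde\pi$.

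Granting this inclusion of kernels, part (i) reduces to a routine factorization through a quotient. The weak-$*$ closed two-sided ideal $\ker\tilde\sigma$ is of the form $(1-z)B^{**}$ for some central projection $z\in B^{**}$, and $\tilde\sigma$ restricts to a $*$-isomorphism of $zB^{**}$ onto $\tilde\sigma(B^{**})$, which is automatically a weak-$*$ homeomorphism. I would set $\rho=\tilde\pi\circ(\tilde\sigma|_{zB^{**}})^{-1}$. Since $(1-z)B^{**}=\ker\tilde\sigma\subseteq\ker\tilde\pi$, we have $\tilde\pi(x)=\tilde\pi(zx)$ and $\tilde\sigma(x)=\tilde\sigma(zx)$ for every $x\in B^{**}$, whence $\rho(\tilde\sigma(x))=\tilde\pi(zx)=\tilde\pi(x)$; thus $\rho\circ\tilde\sigma=\tilde\pi$. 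Normality of $\rho$ is immediate from its construction as a composition of normal maps, and uniqueness is forced by the surjectivity of $\tilde\sigma$ onto its image.

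For part (ii), observe that for $b\in B$ we have $\phi(b)=\langle\tilde\pi(b)\xi,\xi\rangle=\langle\rho(\tilde\sigma(b))\xi,\xi\rangle$. Hence the vector state $\omega_\xi=\langle\,\cdot\,\xi,\xi\rangle$ on $\tilde\pi(B^{**})\subseteq B(H)$ pulls back along $\rho$ to a normal state $\nu=\omega_\xi\circ\rho$ on the von Neumann algebra $\tilde\sigma(B^{**})\subseteq B(K)$ satisfying $\nu\circ\tilde\sigma=\phi$ on $B$. It remains to extend $\nu$ to a normal state $\tau$ on all of $B(K)$: writing the normal state $\nu$ as a sum of vector functionals $\omega_{\zeta_i}$ with $\zeta_i\in K$ and $\sum_i\|\zeta_i\|^2=1$, the same vectors define a normal state $\tau(T)=\sum_i\langle T\zeta_i,\zeta_i\rangle$ on $B(K)$ with $\tau|_{\tilde\sigma(B^{**})}=\nu$. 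Then $\tau\circ\tilde\sigma=\nu\circ\tilde\sigma=\phi$ on $B$, as required.

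The main conceptual obstacle is the kernel identity $\ker\tilde\pi=\{x\in B^{**}:xB\subseteq L_\phi\}$: recognizing that the kernel of the normal extension is exactly the largest two-sided ideal sitting inside the left kernel is what allows the hypothesis of absolute continuity to translate, with no further work, into the containment $\ker\tilde\sigma\subseteq\ker\tilde\pi$ that powers the whole proof. The only other delicate point is carrying out the extension of a normal state from a von Neumann subalgebra to $B(K)$ while preserving both normality and positivity, for which the vector-functional representation above is the cleanest device.
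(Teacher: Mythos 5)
Your proposal is correct and follows essentially the same route as the paper: the same identification $\ker\tilde\pi=\{x\in B^{**}:xB\subset L_\phi\}$ via cyclicity, the same passage to central support projections to factor $\tilde\pi$ through $\tilde\sigma$, and the same pullback of the vector state $\langle\,\cdot\,\xi,\xi\rangle$ along $\rho$ followed by a normal extension to $B(K)$ (the paper cites Sakai for this last step, while you invoke the equivalent vector-sum representation of normal states).
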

\begin{proof}
(i)  The vector $\xi$ is cyclic for $\pi$, so that an element $x\in B^{**}$ belongs to $\ker \tilde\pi$ if and only if 
\[
 \phi((xb)^*(xb))=\|\tilde\pi(x)\pi(b)\xi\|^2=0
\]
for every $b\in B$. In other words, 
\[
\ker  \tilde\pi=\{x\in B^{**}:xB\subset L_\phi\}.
\]
Similarly, 
\[
\ker  \tilde\sigma=\{x\in B^{**}:xB\subset L_\psi\}.
\]
Now, $\phi$ is absolutely continuous with respect to $\psi$, so $L_\psi\subset L_\phi$. We thus infer that $\ker \tilde\sigma\subset \ker \tilde \pi$. 

Next, there are central projection $\fs_\pi, \fs_\sigma\in B^{**}$ satisfying
\[
\ker \tilde\pi=B^{**}(I-\fs_\pi) \qand \ker \tilde\sigma=B^{**}(I-\fs_\sigma).
\]
By the previous paragraph, we see that $\fs_\pi\leq \fs_\sigma$. Hence, the weak-$*$ continuous $*$-homomorphism  $\rho_0:  B^{**}\fs_\sigma\to B^{**} \fs_\pi$ of multiplication  by $\fs_\pi$ is surjective. Furthermore, there are weak-$*$ homemorphic $*$-isomorphisms $\theta_\pi:\tilde\pi(B^{**})\to B^{**} \fs_\pi$ and $\theta_\sigma:\tilde\sigma(B^{**})\to B^{**} \fs_\sigma$ defined as
\[
\theta_\pi(\tilde\pi(x))=x\fs_\pi \qand \theta_\sigma(\tilde\sigma(x))=x\fs_\sigma
\]
for every $x\in B^{**}$. The desired map is then $\rho=\theta_\pi^{-1}\circ \rho_0 \circ \theta_\sigma$, and it is clearly unique.

(ii) Define a weak-$*$ continuous state $\tau_0:B(H) \to \bC$ as 
\[
\tau_0(t)=\langle t\xi,\xi  \rangle, \quad t\in B(H).
\]
By \cite[Proposition 1.24.5]{sakai1971}, there is a weak-$*$ continuous state $\tau$ on $B(K)$ extending $\tau_0\circ \rho$, and this state has the desired properties.
\end{proof}

This fact will be exploited in the following fashion. 

\begin{proposition}\label{P:abscontconv}
Let $B$ be a unital $\rC^*$-algebra. Let $\phi,\psi$ be states on $B$ such that  $\phi$ is absolutely continuous with respect to $\psi$.  Let $\sigma:B\to B(K)$ be the GNS representation of $\psi$. Then, the following statements hold.

\begin{enumerate}[{\rm (i)}]
\item There is a countable orthonormal set of vectors $\{e_n\}$ in $K$ along with  positive numbers $\{t_n\}$ such that $\sum_{n=1}^\infty t_n=1$ and
\[
\phi(b)=\sum_{n=1}^\infty t_n \langle \sigma(b)e_n,e_n \rangle, \quad b\in B.
\]
\item The GNS representation of $\phi$ is unitarily equivalent to a subrepresentation of $\sigma^{(\infty)}$.
\end{enumerate}
\end{proposition}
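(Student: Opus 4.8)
The plan is to leverage Lemma \ref{L:abscont}(ii), which already tells us that $\phi = \tau \circ \tilde\sigma$ for some \emph{normal} state $\tau$ on $B(K)$. The entire content of part (i) is then a structural fact about normal states on $B(K)$: every normal state is a countable convex combination of vector states. First I would invoke the standard representation of normal states on $B(K)$ (see e.g.\ \cite[Proposition 1.15.3]{sakai1971} or the spectral theory of trace-class operators) to write $\tau(t) = \sum_{n=1}^\infty t_n \langle t e_n, e_n\rangle$ for $t \in B(K)$, where $\{e_n\}$ is an orthonormal set obtained from the spectral decomposition of the positive trace-class density operator associated with $\tau$, and the $t_n > 0$ are its eigenvalues summing to $\tau(I) = 1$. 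Composing with $\tilde\sigma$ and using that $\tilde\sigma$ extends $\sigma$, one gets for $b \in B$ that $\phi(b) = \tau(\sigma(b)) = \sum_n t_n \langle \sigma(b) e_n, e_n\rangle$, which is exactly the claimed formula.

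For part (ii) I would reinterpret the formula from (i) as a single vector state for the amplification $\sigma^{(\infty)} = \bigoplus_{n=1}^\infty \sigma$ acting on $K^{(\infty)} = \bigoplus_{n=1}^\infty K$. The natural candidate vector is $\zeta = (\sqrt{t_n}\, e_n)_{n=1}^\infty \in K^{(\infty)}$, which is a unit vector since $\sum_n t_n \|e_n\|^2 = \sum_n t_n = 1$. A direct computation gives
\[
\langle \sigma^{(\infty)}(b) \zeta, \zeta\rangle = \sum_{n=1}^\infty \langle \sigma(b)(\sqrt{t_n}\,e_n), \sqrt{t_n}\,e_n\rangle = \sum_{n=1}^\infty t_n \langle \sigma(b) e_n, e_n\rangle = \phi(b),
\]
so $(\sigma^{(\infty)}, K^{(\infty)}, \zeta)$ is a representation of $\phi$. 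The GNS representation of $\phi$ is then, by uniqueness of the GNS construction, unitarily equivalent to the restriction of $\sigma^{(\infty)}$ to the cyclic subspace $\overline{\sigma^{(\infty)}(B)\zeta}$, which is manifestly a subrepresentation of $\sigma^{(\infty)}$. This is the desired conclusion.

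The main obstacle is largely bookkeeping rather than conceptual: I must be careful that the orthonormal set $\{e_n\}$ genuinely consists of vectors in $K$ (not some abstract Hilbert space) and that the convergence of the series $\sum_n t_n \langle \sigma(b) e_n, e_n\rangle$ is unconditional and uniform enough to define a bounded functional — both of which follow from $\sum_n t_n = 1$ and $\|\sigma(b)\| \le \|b\|$. The one genuine subtlety is ensuring the passage from $\tau$ on $B(K)$ to the formula only uses values of $\tau$ on operators of the form $\sigma(b)$, so that restricting to the range of $\sigma$ causes no loss; this is immediate since $\phi = \tau \circ \tilde\sigma$ and we only ever evaluate at $\sigma(b) = \tilde\sigma(b)$ for $b \in B$. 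Everything else is the standard normal-state decomposition together with the elementary identification of a countable convex combination of vector states with a single vector state of an infinite amplification.
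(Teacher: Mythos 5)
Your proof is correct and follows essentially the same route as the paper: both invoke Lemma \ref{L:abscont} to obtain the normal state $\tau$ with $\phi=\tau\circ\tilde\sigma$, apply the spectral decomposition of the associated trace-class density operator to get the convex combination of vector states, and then realize $\phi$ as the vector state of $\zeta=(\sqrt{t_n}\,e_n)$ in $K^{(\infty)}$ to identify the GNS representation with the restriction of $\sigma^{(\infty)}$ to the cyclic subspace generated by $\zeta$. No substantive differences.
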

\begin{proof}
(i)  By Lemma \ref{L:abscont}, there is a normal state $\tau$ on $B(K)$ such that $\phi=\tau\circ \tilde\sigma$. Hence, there is a positive trace class operator $T$ on $K$ with $\tr(T)=1$ such that
\[
\phi(b)=\tr(\sigma(b)T), \quad b\in B.
\]
Applying the spectral theorem to $T$, there is a countable orthonormal set of vectors $\{e_n\}$ in $K$ along with  positive numbers $\{t_n\}$ such that $\sum_{n=1}^\infty t_n=1$ and
\[
\phi(b)=\sum_{n=1}^\infty t_n \langle \sigma(b)e_n,e_n \rangle, \quad b\in B.
\]

(ii) We see that $\xi=(t_n^{1/2}e_n)$ is a unit vector in $K^{(\infty)}$ and satisfies
\begin{align*}
\phi(b)&=\langle \sigma^{(\infty)}(b)\xi,\xi\rangle, \quad b\in B.
\end{align*}
It follows that the GNS representation of $\phi$ is unitarily equivalent to the restriction of $\sigma^{(\infty)}$ to the cyclic subspace generated by $\xi$.
\end{proof}

There are two noteworthy consequences of this proposition. First, we state a complement to the ``approximate" Radon--Nikodym theorem found in \cite[Theorem 2.7]{CT2023} that may be of independent interest.

\begin{corollary}\label{C:RN}
Let $B$ be a unital $\rC^*$-algebra. Let $\phi,\psi$ be states on $B$ such that  $\phi$ is absolutely continuous with respect to $\psi$. Then, $\phi$ belongs to the norm closure of the convex hull of
\[
\{\psi(b^* \cdot b):b\in B, \psi(b^*b)=1\}.
\]
\end{corollary}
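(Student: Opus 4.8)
The plan is to read the conclusion off the decomposition supplied by Proposition~\ref{P:abscontconv}(i). Writing $(\sigma, K, \eta)$ for the GNS representation of $\psi$, that result gives an orthonormal set $\{e_n\}$ in $K$ and positive weights $t_n$ with $\sum_n t_n = 1$ such that $\phi = \sum_n t_n \omega_{e_n}$, where $\omega_v(x) = \langle \sigma(x)v, v\rangle$ denotes the vector state at a unit vector $v$. The starting point is the identity $\psi(b^* x b) = \langle \sigma(x)\sigma(b)\eta, \sigma(b)\eta\rangle = \omega_{\sigma(b)\eta}(x)$, together with $\psi(b^*b) = \|\sigma(b)\eta\|^2$; this exhibits the target set $T = \{\psi(b^* \cdot b) : \psi(b^*b)=1\}$ as precisely the collection of vector states $\omega_{\sigma(b)\eta}$ attached to unit vectors of the form $\sigma(b)\eta$.

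First I would show that each single vector state $\omega_{e_n}$ lies in the norm closure $\ol{T}$. Since $\eta$ is cyclic for $\sigma$, the vectors $\sigma(b)\eta$ are dense in $K$, so $e_n$ is approximated in norm by such vectors; rescaling $b$ by a scalar arranges $\|\sigma(b)\eta\| = 1$, i.e.\ $\psi(b^*b)=1$. The standard bound $\|\omega_f - \omega_v\| \le 2\|f-v\|$ for unit vectors then places $\omega_{e_n}$ in $\ol{T}$.

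Next I would pass from the infinite series to the convex hull by truncating and renormalizing. Given $\eps>0$, I would fix $N$ with $\sum_{n>N} t_n < \eps$ and form the genuine convex combination $\mu = \bigl(t_1 + \sum_{n>N}t_n\bigr)\omega_{e_1} + \sum_{n=2}^N t_n \omega_{e_n}$ of the states $\omega_{e_1},\dots,\omega_{e_N}$. Since the discarded tail is a positive functional of norm $\sum_{n>N}t_n$, a short estimate gives $\|\phi - \mu\| \le 2\sum_{n>N}t_n < 2\eps$. As each $\omega_{e_n}\in \ol{T}$, we have $\mu \in \conv(\ol{T}) \subseteq \ol{\conv(T)}$, and letting $\eps\to 0$ yields $\phi \in \ol{\conv(T)}$, which is the assertion.

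The genuine content has already been extracted in Proposition~\ref{P:abscontconv}: the hard part is producing the countable convex decomposition of $\phi$ into vector states of the \emph{single} representation $\sigma$. Given that, the remaining obstacles are only mild bookkeeping — approximating arbitrary unit vectors by cyclic vectors $\sigma(b)\eta$, and converting a sub-probability truncation into an honest convex combination. The one point requiring a little care is matching the normalization constraint $\psi(b^*b)=1$ exactly, which the scalar rescaling handles.
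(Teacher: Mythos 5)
Your proof is correct and follows essentially the same route as the paper's: both first use cyclicity of $\eta$ and the estimate $\|\omega_f-\omega_v\|\le 2\|f-v\|$ to place every vector state of $\sigma$ in the norm closure of the target set, and then invoke Proposition~\ref{P:abscontconv}(i) to express $\phi$ as a countable convex combination of such vector states. The only difference is that you spell out the truncation-and-renormalization step that the paper leaves implicit.
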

\begin{proof}
Let $(\sigma,K,\eta)$ be the GNS representation of $\psi$. Let $\xi\in K$ be a unit vector. Given $\eps>0$, cyclicity of $\eta$ implies that there is $b\in B$ such that $\psi(b^*b)=1$ and $\|\sigma(b)\eta-\xi\|<\eps$. In turns, this implies that
\[
\|\langle \sigma(\cdot )\xi,\xi\rangle -\psi(b^*\cdot b)\|<2\eps.
\]
Hence, the state $\langle \sigma(\cdot)\xi,\xi\rangle$ lies in the norm-closure of $\{\psi(b^* \cdot b):b\in B, \psi(b^*b)=1\}.$ Proposition \ref{P:abscontconv}(i) then yields the desired conclusion.
\end{proof}

The second consequence of Proposition \ref{P:abscontconv} is the crucial technical step in achieving our goal in this section.

\begin{theorem}\label{T:abscont}
Let $B$ be a unital $\rC^*$-algebra and let $S\subset B$ be an operator system such that $B=\rC^*(S)$.  Let $\phi,\psi$ be states on $B$ such that  $\phi$ is absolutely continuous with respect to $\psi$.  If $\psi$ is $\D(S,B)$-maximal, then so is $\phi$.
\end{theorem}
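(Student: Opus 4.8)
The plan is to translate the assertion into the language of the unique extension property via Theorem \ref{T:Dmaxuep}, and then to exploit the fact that this property is inherited both by direct sums and by subrepresentations. The technical heavy lifting will already have been done in Proposition \ref{P:abscontconv}, so the argument here is essentially a matter of assembling the right ingredients.

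First I would let $(\sigma,K,\eta)$ and $(\pi,H,\xi)$ denote the GNS representations of $\psi$ and $\phi$, respectively. Since $\psi$ is assumed to be $\D(S,B)$-maximal, Theorem \ref{T:Dmaxuep} tells me that $\sigma$ has the unique extension property with respect to $S$. The goal then becomes to establish that $\pi$ shares this property, for then a second application of Theorem \ref{T:Dmaxuep}---whose cyclic-vector hypothesis is automatically satisfied, as $\pi$ is a GNS representation---would yield that $\phi$ is $\D(S,B)$-maximal.

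The engine of the argument is Proposition \ref{P:abscontconv}(ii): absolute continuity of $\phi$ with respect to $\psi$ guarantees that $\pi$ is unitarily equivalent to a subrepresentation of the inflation $\sigma^{(\infty)}$. I would combine this with the permanence properties of the unique extension property recalled in the introduction \cite[Lemma 2.8]{CTh2022}. Since $\sigma$ has the unique extension property and this property passes to direct sums, the inflation $\sigma^{(\infty)}$ also has it; since it additionally passes to subrepresentations, the representation $\pi$ inherits it in turn. Applying Theorem \ref{T:Dmaxuep} to $\phi$ then closes the loop.

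I do not expect a genuine obstacle at this stage, precisely because nearly all of the difficulty has been front-loaded into Proposition \ref{P:abscontconv}, which via Lemma \ref{L:abscont} converts the abstract notion of absolute continuity into a concrete comparison of representations at the GNS level. The only point requiring mild care is to confirm that the equivalence furnished by Proposition \ref{P:abscontconv}(ii) is one of $*$-representations restricted to a reducing subspace---which it is, being the restriction of $\sigma^{(\infty)}$ to the cyclic (hence reducing) subspace generated by the relevant unit vector---so that the word \emph{subrepresentation} is understood in exactly the sense under which the unique extension property is known to descend.
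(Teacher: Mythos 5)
Your argument is correct and is essentially identical to the paper's proof: both pass from $\D(S,B)$-maximality of $\psi$ to the unique extension property of its GNS representation via Theorem \ref{T:Dmaxuep}, use Proposition \ref{P:abscontconv}(ii) to realize the GNS representation of $\phi$ as a subrepresentation of $\sigma^{(\infty)}$, invoke the permanence of the unique extension property under direct sums and subrepresentations from \cite[Lemma 2.8]{CTh2022}, and then apply Theorem \ref{T:Dmaxuep} once more. No gaps.
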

\begin{proof}
Let $\sigma:B\to B(K)$ be the GNS representation of $\psi$. Since $\psi$ is $\D(S,B)$-maximal, we infer from Theorem \ref{T:Dmaxuep} that $\sigma$ has the unique extension property with respect to $S$. In turn, by virtue of  \cite[Lemma 2.8]{CTh2022}, we see that any subrepresentation of $\sigma^{(\infty)}$ also has the unique extension property with respect to $S$. In particular, this is the case for the GNS representtion of $\phi$  by Proposition \ref{P:abscontconv}(ii). Another application of Theorem \ref{T:Dmaxuep} implies that $\phi$ is also $\D(S,B)$ maximal.
\end{proof}

We can prove the main result of the paper.

\begin{theorem}\label{T:bdryproj}
Let $B$ be a unital $\rC^*$-algebra and let $S\subset B$ be an operator system such that $B=\rC^*(S)$. Then, the following statements hold.
\begin{enumerate}[{\rm (i)}]
\item The set of $\D(S,B)$-maximal elements  is a norm-closed face of $\E(B).$

\item There exists a projection $\fd \subset B^{**}$ with the property that a state $\phi$ on $B$ is $\D(S,B)$-maximal precisely when $\phi(\fd)=1$.
\end{enumerate}
\end{theorem}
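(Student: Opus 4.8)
The plan is to deduce (ii) from (i) together with the classical bijection between norm-closed faces of a state space and projections in the bidual, so the real substance lies in establishing (i). Write $M\subset\E(B)$ for the set of $\D(S,B)$-maximal states. The engine throughout is Theorem \ref{T:abscont}: maximality propagates downward along absolute continuity. (Recall throughout that a state $\phi$ is identified with its normal extension to $B^{**}$, as in the definition of $L_\phi$.)

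First I would establish the \emph{face} property. Suppose $\phi\in M$ decomposes as $\phi=t\phi_1+(1-t)\phi_2$ with $\phi_1,\phi_2\in\E(B)$ and $0<t<1$. For $x\in B^{**}$ we have $\phi(x^*x)=t\phi_1(x^*x)+(1-t)\phi_2(x^*x)$, and since all terms are nonnegative with $t>0$, the vanishing of the left-hand side forces $\phi_1(x^*x)=0$. Hence $L_\phi\subset L_{\phi_1}$, that is, $\phi_1$ is absolutely continuous with respect to $\phi$, and likewise for $\phi_2$. As $\phi$ is maximal, Theorem \ref{T:abscont} places $\phi_1,\phi_2\in M$. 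I would obtain \emph{convexity} of $M$ in the opposite direction, via the unique extension property: given $\phi_1,\phi_2\in M$ with GNS representations $(\pi_i,H_i,\xi_i)$, the triple $(\pi_1\oplus\pi_2,\,H_1\oplus H_2,\,(\sqrt{t}\,\xi_1,\sqrt{1-t}\,\xi_2))$ represents $\phi=t\phi_1+(1-t)\phi_2$, so the GNS representation of $\phi$ is a subrepresentation of $\pi_1\oplus\pi_2$. By Theorem \ref{T:Dmaxuep} each $\pi_i$ has the unique extension property relative to $S$; since this property passes to direct sums and to subrepresentations \cite[Lemma 2.8]{CTh2022}, so does the GNS representation of $\phi$, whence $\phi\in M$ by Theorem \ref{T:Dmaxuep} again.

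The same direct-sum computation, now with the countable orthogonal vector $(2^{-n/2}\xi_n)_n$ in $\bigoplus_n H_n$, shows that $M$ is closed under \emph{countable} convex combinations $\sum_n t_n\phi_n$ of maximal states. This is exactly what is needed for \emph{norm-closedness}: given maximal $\phi_n\to\phi$ in norm, set $\psi=\sum_n 2^{-n}\phi_n\in M$. If $\psi(x^*x)=0$ for some $x\in B^{**}$, then every $\phi_n(x^*x)=0$; and since $\phi_n\to\phi$ in $B^*$ norm and the extension map $B^*\cong (B^{**})_*$ is isometric, we get $\phi(x^*x)=\lim_n\phi_n(x^*x)=0$. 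Thus $L_\psi\subset L_\phi$, so $\phi$ is absolutely continuous with respect to $\psi$, and Theorem \ref{T:abscont} gives $\phi\in M$. This proves (i).

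For (ii) I would invoke the standard correspondence $p\mapsto F_p:=\{\phi\in\E(B):\phi(p)=1\}$ between projections $p\in B^{**}$ and norm-closed faces of $\E(B)$: each map $\phi\mapsto\phi(p)$ is affine, norm-continuous, and valued in $[0,1]$, so $F_p$ is a norm-closed face, and conversely every norm-closed face arises this way (this is part of the facial structure theory underlying \cite{akemann1969},\cite{CTh2023}). Applying this to the norm-closed face $M$ from (i) yields the projection $\fd$; concretely one may take $\fd=\sup\{s(\phi):\phi\in M\}$, the supremum in $B^{**}$ of the support projections, for which $\phi(\fd)=1$ whenever $\phi\in M$ is immediate, the reverse implication being precisely the content of the correspondence. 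I expect the main difficulty to be conceptual rather than computational: the decisive input is Theorem \ref{T:abscont}, and the key is to resist proving $M$ \emph{weak-$*$} closed. Indeed $\fd$ need not be a closed projection—by Theorem \ref{T:B} its closedness is equivalent to hyperrigidity—so one must establish only norm-closedness (hence the countable-combination device above) and match it with a general, possibly non-closed, projection.
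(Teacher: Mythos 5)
Your argument for (i) is correct and is essentially the paper's own proof: the face property comes from observing that each summand in a convex decomposition of a maximal state is absolutely continuous with respect to it and then invoking Theorem \ref{T:abscont}, and norm-closedness comes from forming $\psi=\sum_n 2^{-n}\phi_n$, showing it is maximal via the direct-sum/subrepresentation argument for the unique extension property, and checking $L_\psi\subset L_\phi$. (You are in fact slightly more careful than the paper, which asserts the face property after verifying only the extremality direction; your explicit convexity argument via $\bigoplus\pi_i$ is the missing half, and it is the same computation the paper uses for the countable combination anyway.) The only real divergence is in (ii): the paper routes through \cite[Theorem 3.5]{CT2023}, which produces from a norm-closed face a projection $\fd$ characterizing absolute continuity with respect to members of the face, and then needs a multiplicative-domain argument plus one more application of Theorem \ref{T:abscont} to convert ``absolutely continuous with respect to some maximal state'' into ``maximal.'' You instead invoke the classical Effros correspondence between projections in a von Neumann algebra and norm-closed faces of its normal state space, applied to $B^{**}$ and $\E(B)$. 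That correspondence is a genuine theorem (Effros, 1963; see also Alfsen--Shultz), and your identification $\fd=\sup\{s(\phi):\phi\in M\}$ is consistent with it, so the route is legitimate; it buys you a shorter deduction of (ii) at the cost of importing a heavier classical black box, whereas the paper's route keeps everything phrased in the absolute-continuity language that drives the rest of the section. You should, however, replace the vague attribution to \cite{akemann1969},\cite{CTh2023} with a precise citation of the face--projection theorem, since that is the one nontrivial external input in your version of (ii).
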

\begin{proof}
(i) Let $\theta$ be a $\D(S,B)$-maximal state on $B$. Assume that there is $0<t<1$ and states $\phi,\psi$ on $B$ such that $\theta=t\phi+(1-t)\psi$. It is easily verified that both $\phi$ and $\psi$ are absolutely continuous with respect to $\theta$, so that $\phi,\psi$ are also $\D(S,B)$-maximal by Theorem \ref{T:abscont}. This shows that the $\D(S,B)$-maximal elements form a face of the state space of $B$.

Next, let $(\phi_n)$ be a sequence of $\D(S,B)$-maximal states converging in norm to some state $\phi$ on $B$. For each $n$, let $\sigma_n:B\to B(H_n)$ be the GNS representation of $\phi_n$, which has the unique extension property with respect to $S$ by Theorem \ref{T:Dmaxuep}. Arguing as in the proof of Proposition \ref{P:abscontconv}(ii), we see that the GNS representation of $\sum_{n=1}^\infty \frac{1}{2^n}\phi_n$ is a subrepresentation of $\bigoplus_{n=1}^\infty \sigma_n$, and hence also has the unique extension property with respect to $S$ \cite[Lemma 2.8]{CTh2022}. Therefore, $\sum_{n=1}^\infty \frac{1}{2^n}\phi_n$ is $\D(S,B)$-maximal by Theorem \ref{T:Dmaxuep}. A  routine calculation reveals that  $\phi$ is absolutely continuous with respect to $\sum_{n=1}^\infty \frac{1}{2^n}\phi_n$, whence $\phi$ is $\D(S,B)$-maximal by Theorem \ref{T:abscont}. 

(ii) By (i), we may apply  \cite[Theorem 3.5]{CT2023} to find a projection $\fd\in B^{**}$ with the property that a state $\phi$ on $B$ is absolutely continuous with respect to some $\D(S,B)$-maximal state precisely when 
\[
\phi(b)=\phi(\fd b), \quad b\in B.
\]
A standard multiplicative domain argument reveals that this is simply equivalent to $\phi(\fd)=1$. Another application of Theorem \ref{T:abscont} completes the proof.
\end{proof}

The projection $\fd$ in the previous result completely determines the set of $\D(S,B)$-maximal elements: these are the states that are ``concentrated" on $\fd$. This phenomenon is reminiscent of the notion of a $\D(S,B)$-boundary from Section \ref{S:boundaries}. For this reason, we call $\fd$ the \emph{boundary projection} of the dilation order.

\subsection{Hyperrigidity and non-commutative topology}

We saw in Corollary \ref{C:Arvreform} that the existence of a $\D(S,B)$-boundary would yield a positive solution to Arveson's conjecture.
For the remainder of the paper, we examine whether there is a similar relationship between the boundary projection $\fd$ and the conjecture.

By virtue of Theorem \ref{T:dilmaxinv}, we know that there is a weak-$*$ closed convex pre-order on $\E(B)$ with the same maximal elements as $\D(S,B)$. If $B$ is separable, the set $\Omega$ of pure $\D(S,B)$-maximal states is therefore Borel measurable by Lemma \ref{L:Borel}. Recall that we denote by $\Sigma_\Omega$ those states $\phi$ on $B$ for which there is a Borel probability measure concentrated on $\Omega$ such that $\phi=\int_\Omega \omega d\mu(\omega)$.

A projection $q\in B^{**}$ is said to be \emph{closed} if it is the decreasing weak-$*$ limit of a net of contractions in $B$ \cite{akemann1969},\cite{akemann1970left}. When $B$ is separable, the net can be chosen to be a sequence. The projection $q$ is \emph{open} if $I-q$ is closed.

We now give a generalization of  \cite[Theorem 3.2]{bishop1959}. 

\begin{corollary}\label{C:BdL}
Let $B$ be a separable unital $\rC^*$-algebra and let $S\subset B$ be an operator system such that $B=\rC^*(S)$. Any state $\phi\in \Sigma_\Omega$ satisfies $\phi(q)=0$ if $q\in B^{**}$ is a closed projection orthogonal to the boundary projection $\fd$.
\end{corollary}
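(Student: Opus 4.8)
The statement to prove is Corollary \ref{C:BdL}: if $\phi \in \Sigma_\Omega$ and $q \in B^{**}$ is a closed projection orthogonal to the boundary projection $\fd$, then $\phi(q) = 0$. The plan is to reduce to the pure state generator of $\phi$ and exploit the fact (from \eqref{Eq:Bauer} and the definition of $\Omega$) that a pure state lies in $\Omega$ precisely when it is $\D(S,B)$-maximal, which by Theorem \ref{T:bdryproj}(ii) means it evaluates to $1$ on $\fd$.

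First I would unpack the hypothesis $\phi \in \Sigma_\Omega$: there is a Borel probability measure $\mu$ concentrated on $\Omega$ with $\phi = \int_\Omega \omega\, d\mu(\omega)$, where $\Omega$ is Borel measurable by the remarks preceding the corollary (combining \cite{bishop1959} with Lemma \ref{L:Borel} and Theorem \ref{T:dilmaxinv}). For each $\omega \in \Omega$, the state $\omega$ is pure and $\D(S,B)$-maximal, so Theorem \ref{T:bdryproj}(ii) gives $\omega(\fd) = 1$. Since $q \perp \fd$, i.e.\ $q\fd = \fd q = 0$, I would argue that $\omega(q) = 0$ for every such $\omega$: the inequality $q \leq I - \fd$ together with $\omega(I - \fd) = 0$ and positivity of $\omega$ (extended weak-$*$ continuously to $B^{**}$) forces $0 \leq \omega(q) \leq \omega(I - \fd) = 0$.

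The remaining step is to integrate this pointwise vanishing against $\mu$. The subtlety here — and the step I expect to be the main obstacle — is that $q$ lives in the bidual $B^{**}$, not in $B$ itself, so the identity $\phi(q) = \int_\Omega \omega(q)\, d\mu(\omega)$ is not immediate from the defining property $\phi(b) = \int \omega(b)\, d\mu(\omega)$, which only holds a priori for $b \in B$. This is exactly where closedness of $q$ enters. Since $B$ is separable and $q$ is closed, I would write $q$ as the decreasing weak-$*$ limit of a sequence $(b_k)$ of contractions in $B$. For each fixed $k$, the defining property of $\mu$ and the just-established bound give $\phi(b_k) = \int_\Omega \omega(b_k)\, d\mu(\omega)$, and one controls $\omega(b_k)$ using $q \leq b_k$ to keep the integrand nonnegative. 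The plan is then to pass to the limit on both sides: on the left, $\phi(b_k) \to \phi(q)$ by weak-$*$ continuity of the bidual extension of $\phi$; on the right, I would invoke the monotone (or dominated) convergence theorem, using that $\omega(b_k) \downarrow \omega(q) = 0$ for $\mu$-almost every $\omega$, to conclude $\int_\Omega \omega(b_k)\, d\mu(\omega) \to 0$. Combining these yields $\phi(q) = 0$.

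The delicate points to verify carefully are the Borel measurability of the integrand $\omega \mapsto \omega(b_k)$ (immediate, since $b_k \in B$ and evaluation is weak-$*$ continuous), and the legitimacy of the convergence interchange, which requires that the sequence $(b_k)$ be decreasing so that monotone convergence applies cleanly to the nonnegative integrands $\omega(b_k)$. A clean alternative would be to quote \cite[Theorem 3.2]{bishop1959}, of which this corollary is advertised as a generalization, adapting its argument to the noncommutative setting via the boundary projection; but the direct approach above, anchored on Theorem \ref{T:bdryproj}(ii) and the closedness of $q$, is the most self-contained route.
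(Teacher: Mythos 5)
Your proposal is correct and follows essentially the same route as the paper's proof: reduce to $\omega(q)\leq\omega(I-\fd)=0$ for $\omega\in\Omega$ via Theorem \ref{T:bdryproj}, then use separability to write the closed projection $q$ as a decreasing weak-$*$ limit of contractions in $B$ and pass the limit through the integral by dominated convergence. The only cosmetic difference is your extra care about monotonicity and nonnegativity of the integrands, which the paper sidesteps by invoking dominated convergence with the uniform bound $\|b_n\|\leq 1$.
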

\begin{proof}
Fix a closed projection $q\in B^{**}$ orthogonal to $\fd$.
Let $\phi$ be a state in $\Sigma_\Omega$. We can find a Borel probability measure $\mu$ concentrated on the set $\Omega$ such that
\[
\phi(b)=\int_{\Omega} \omega(b)d\mu(\omega), \quad b\in B.
\]
Next, note that $\omega(q)\leq \omega(I-\fd)=0$ for every $\omega\in \Omega$ by Theorem \ref{T:bdryproj}. Since $q$ is closed and $B$ is separable, there is a decreasing contractive sequence $(b_n)$ converging to $q$ in the weak-$*$ topology of $B^{**}$. By the dominated convergence theorem, we find
\[
\phi(q)=\lim_{n\to\infty}\int_\Omega\omega(b_n)d\mu(\omega)=\int_\Omega \omega(q)d\mu(\omega)=0
\]
as desired.
\end{proof}

We make some remarks about the previous result. 

First, recall from Corollary \ref{C:Arvreform} that we are interested in determining if the dilation order is hyperrigid. Using the boundary projection from Theorem \ref{T:bdryproj}, this condition is simply saying that $\phi(\fd)=1$ whenever $\phi\in \Sigma_\Omega$. Corollary \ref{C:BdL} yields a similar, albeit weaker, statement. Roughly speaking, it can be thought of as saying that each state $\phi\in \Sigma_\Omega$ is ``concentrated" on the boundary projection $\fd$. Indeed, the conclusion is a non-commutative analogue of a standard way of formalizing the idea that a measure is concentrated on some possibly non-measurable set; see \cite[Theorem 3.4]{bishop1959},\cite[Section 9]{DK2019}.  

Second, assume that $\Delta\subset \E(B)\times \E(B)$ is a weak-$*$ closed, convex pre-order whose pure maximal elements are all $\D(S,B)$-maximal as well. We may then apply Theorem \ref{T:BdL} to see that any $\Delta$-maximal state lies in $\Sigma_\Omega$, so that they all satisfy the conclusion of Corollary \ref{C:BdL}. In particular, this principle can be applied when $B$ is commutative and $\Delta$ is the classical Choquet order \cite{DK2021}.

Finally, as an application of Corollary \ref{C:BdL}, we show how hyperrigidity can be reformulated in non-commutative topological terms using the boundary projection.

\begin{corollary}\label{C:HRtop}
Let $B$ be a separable unital $\rC^*$-algebra and let $S\subset B$ be an operator system such that $B=\rC^*(S)$. Assume that every pure state on $B$ is $\D(S,B)$-maximal. 
Then, the following statements are equivalent.
\begin{enumerate}[{\rm (i)}]
\item The operator system $S$ is hyperrigid in $B$.
\item The boundary projection $\fd$ is closed.
\item The boundary projection $\fd$ is the infimum of a collection of open projections in $B^{**}$.
\end{enumerate}
\end{corollary}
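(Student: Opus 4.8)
The plan is to show that, under the standing hypothesis, all three conditions are equivalent to the single equation $\fd=I$, and to extract this equation from (ii) and (iii) by feeding suitable closed projections into Corollary \ref{C:BdL}.

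First I would record a preliminary reduction. Since every pure state is $\D(S,B)$-maximal by assumption, we have $\Omega=\E_p(B)$; because $\R_\phi$ is non-empty for every state $\phi$ by \cite[Theorem 4.2]{bishop1959} and each measure in $\R_\phi$ is concentrated on $\E_p(B)$ by definition, it follows that $\Sigma_\Omega=\E(B)$. Combining Corollary \ref{C:Arvreform} with Theorem \ref{T:bdryproj}, statement (i) holds precisely when every state $\phi$ on $B$ is $\D(S,B)$-maximal, that is, $\phi(\fd)=1$ for all $\phi$, which is the same as $\fd=I$. Moreover, specializing Corollary \ref{C:BdL} to the case $\Sigma_\Omega=\E(B)$ yields the key tool: any closed projection $q\in B^{**}$ orthogonal to $\fd$ satisfies $\phi(q)=0$ for every state $\phi$, and hence $q=0$.

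With these in hand, the implications (i)$\Rightarrow$(ii) and (i)$\Rightarrow$(iii) are immediate, as $\fd=I$ is trivially closed and is the infimum of the single open projection $I$. For (iii)$\Rightarrow$(i), I would write $\fd=\inf_\alpha p_\alpha$ with each $p_\alpha$ open. Since $\fd\leq p_\alpha$, each $I-p_\alpha$ is a closed projection satisfying $I-p_\alpha\leq I-\fd$, hence orthogonal to $\fd$; the key tool forces $I-p_\alpha=0$, so $p_\alpha=I$ for all $\alpha$ and $\fd=I$.

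The remaining and most delicate direction is (ii)$\Rightarrow$(i), where separability enters through non-commutative topology. If $\fd$ is closed, then $I-\fd$ is open, and the corresponding hereditary subalgebra of $B$ is $\sigma$-unital, so it possesses a strictly positive element $h$ whose range projection is $I-\fd$. The spectral projections of $h$ over the intervals $[1/n,1]$ then form an increasing sequence of closed projections, each dominated by $I-\fd$, with supremum $I-\fd$; each is orthogonal to $\fd$ and hence vanishes by the key tool, so that $I-\fd=0$ and $\fd=I$. The main obstacle is exactly this step, expressing the open projection $I-\fd$ as a supremum of closed subprojections. This is where separability is genuinely used, and it has no counterpart for an arbitrary closed projection (a closed projection need not be an infimum of open ones), which is what makes the equivalence special to the boundary projection rather than a formal fact of Akemann's non-commutative topology.
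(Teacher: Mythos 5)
Your proof is correct, and it closes the cycle of implications along a genuinely different path from the paper for one step. The preliminary reduction (condition (i) is equivalent to $\fd=I$, and every closed projection orthogonal to $\fd$ must vanish because $\Sigma_\Omega=\E(B)$) and the implication (iii) $\Rightarrow$ (i) coincide with the paper's argument. The difference is in handling (ii): the paper proves (i) $\Rightarrow$ (ii) $\Rightarrow$ (iii) $\Rightarrow$ (i), outsourcing (ii) $\Rightarrow$ (iii) to \cite[Proposition 2.3]{hay2007} (closed projections are infima of open projections), whereas you prove (ii) $\Rightarrow$ (i) directly: since $I-\fd$ is open and $B$ is separable, the associated hereditary subalgebra is $\sigma$-unital, a strictly positive element $h$ has support projection $I-\fd$, and the spectral projections $\chi_{[1/n,\|h\|]}(h)$ are closed subprojections of $I-\fd$ increasing to it; each is killed by Corollary \ref{C:BdL}, so $I-\fd=0$. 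You then recover (ii) $\Rightarrow$ (iii) by composition through (i). Your route makes the corollary self-contained (no appeal to Hay) at the cost of a slightly more hands-on spectral argument; the paper's route is shorter but imports an external regularity fact.

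One caveat about your closing aside: the parenthetical claim that ``a closed projection need not be an infimum of open ones'' is in tension with the paper, which cites \cite[Proposition 2.3]{hay2007} precisely for the statement that a closed projection \emph{is} an infimum of open projections (this is how the paper gets (ii) $\Rightarrow$ (iii)). The direction of Akemann's topology that genuinely fails in general is the converse, namely that an infimum of open projections need not be closed, which is why (iii) $\Rightarrow$ (ii) carries content. This remark is only commentary and does not affect the validity of your proof, but you should correct it.
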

\begin{proof}
(i) $\Rightarrow$ (ii): In this case, all states on $B$ are $\D(S,B)$-maximal, so that $\fd=I$ by Theorem \ref{T:bdryproj}, which is then trivially closed.

(ii) $\Rightarrow$ (iii): This follows from \cite[Proposition 2.3]{hay2007}.

(iii) $\Rightarrow$ (i): Assume that there is a collection $\U\subset B^{**}$ of open projections such that $\fd=\wedge\{u:u\in \U\}$. Fix $u\in \U$. Then, $I-u$ is a closed projection orthogonal to $\fd$. Since every pure state is $\D(S,B)$-maximal, every state on $B$ lies in $\Sigma_\Omega$ by \cite[Theorem 4.2]{bishop1959}. Hence, applying Corollary \ref{C:BdL}, we see that $\phi(I-u)=0$ for every state $\phi$ on $B$, and consequently $u=I$. It follows that $\fd=I$.
\end{proof}

\bibliography{orders}
\bibliographystyle{plain}

\end{document}